%
\documentclass[12pt, reqno]{amsart}
\usepackage{amsmath, amsthm, amscd, amsfonts, amssymb, graphicx, color}
\usepackage[bookmarksnumbered, colorlinks, plainpages]{hyperref}
\usepackage{slashbox}
\textheight 22.5truecm \textwidth 14.5truecm
\setlength{\oddsidemargin}{0.35in}\setlength{\evensidemargin}{0.35in}

\setlength{\topmargin}{-.5cm}

\newtheorem{theorem}{Theorem}[section]
\newtheorem{lemma}[theorem]{Lemma}

\newtheorem{corollary}[theorem]{Corollary}
\theoremstyle{definition}

\newtheorem{example}[theorem]{Example}

\theoremstyle{remark}
\newtheorem{remark}[theorem]{Remark}
\numberwithin{equation}{section}

\begin{document}
\setcounter{page}{1}

\title[On weighted means and their inequalities]{On weighted means and their inequalities}

\author[M. Ra\"{\i}ssouli and S. Furuichi]{Mustapha Ra\"{\i}ssouli$^{1,2}$ and Shigeru Furuichi$^{3}$}

\address{$^{1}$ Department of Mathematics, Science Faculty, Taibah University,
Al Madinah Al Munawwarah, P.O.Box 30097, Zip Code 41477, Saudi Arabia.}

\address{$^{2}$ Department of Mathematics, Science Faculty, Moulay Ismail University, Meknes, Morocco.}

\address{$^{3}$ Department of Information Science, College of Humanities and Sciences, Nihon University,
3-25-40, Sakurajyousui, Setagaya-ku, Tokyo, 156-8550, Japan}

\email{\textcolor[rgb]{0.00,0.00,0.84}{raissouli.mustapha@gmail.com}}
\email{\textcolor[rgb]{0.00,0.00,0.84} {furuichi@chs.nihon-u.ac.jp}}
\
\subjclass[2010]{}

\keywords{weighted means, weighted operator means.}

\date{Received: xxxxxx; Revised: yyyyyy; Accepted: zzzzzz.}

\begin{abstract}
In \cite{PSMA}, Pal et al. introduced some weighted means and gave some related inequalities by using an approach for operator monotone functions. This paper discusses the construction of these weighted means in a simple and nice setting that immediately leads to the inequalities established there. The related operator version is here immediately deduced as well. According to our constructions of the means, we study all cases of the weighted means from three weighted arithmetic/geometric/harmonic means, by the use of the concept such as stable and stabilizable means. Finally, the power symmetric means are studied and new weighted power means are given.
\end{abstract}

\maketitle

\section{\bf Introduction}

The mean inequalities arise in various contexts and attract many mathematicians by their developments and applications. It has been proved throughout the literature that the mean-theory is useful in theoretical point of view as well as in practical purposes.

\subsection{\bf Standard weighted means.} As usual, we understand by (binary) mean a map $m$ between two positive numbers such that $\min(a,b)\leq m(a,b)\leq\max(a,b)$ for any $a,b>0$. Continuous (symmetric/homogeneous) means are defined in the habitual way. If $m$ is a mean we define its dual by $m^*(a,b)=\left(m(a^{-1},b^{-1})\right)^{-1}$. It is easy to see that if $m$ is continuous,(resp. symmetric, homogeneous) then so is $m^*$. Of course, $m^{**}=m$ for any mean $m$. The means $(a,b)\longmapsto\min(a,b)$ and $(a,b)\longmapsto\max(a,b)$ are called trivial means. A mean $m$ is called strict if $m(a,b)=a$ implies $a=b$. The trivial means are not strict.

Among the standard means, we recall the arithmetic mean $a\nabla b=\dfrac{a+b}{2}$, the geometric mean $a\sharp b=\sqrt{ab}$, the harmonic mean $a!b=\dfrac{2ab}{a+b}$, the logarithmic mean $L(a,b)=\dfrac{b-a}{\log b-\log a}$ with $L(a,a)=a$ and the identric mean $I(a,b)=e^{-1}\big(b^b/a^a\big)^{1/(b-a)}$ with $I(a,a)=a$. It is easy to see that $H^*=A$ and $G^*=G$. The explicit expressions of $L^*$ and $I^*$ can be easily deduced from those of $L$ and $I$, respectively. All these means are strict. The following chain of inequalities is well-known in the literature
\begin{equation}\label{0}
a!b\leq I^*(a,b)\leq L^*(a,b)\leq a\sharp b\leq L(a,b)\leq I(a,b)\leq a\nabla b.
\end{equation}

Let $m_v$ be a binary map indexed by $v\in[0,1]$. We say that $m_v$ is a weighted mean if the following assertions are satisfied:\\
(i) $m_v$ is a mean, for any $v\in[0,1]$,\\
(ii) $m_{1/2}:=m$ is a symmetric mean,\\
(iii) $m_v(a,b)=m_{1-v}(b,a)$ for any $a,b>0$ and $v\in[0,1]$.

It is obvious that, (iii) implies (ii). The mean $m:=m_{1/2}$ is called the associated symmetric mean of $m_v$. It is not hard to check that, if $m_v$ is a weighted mean then so is $m_v^*$.

The standard weighted means are recalled in the following. The weighted arithmetic mean $a\nabla_vb=(1-v)a+vb$, the weighted geometric mean $a\sharp_vb=a^{1-v}b^v$ and the weighted harmonic mean $a!_vb=\big((1-v)a^{-1}+vb^{-1}\big)^{-1}$. For $v=1/2$ they coincide with $a\nabla b$, $a\sharp b$ and $a!b$, respectively. These weighted means satisfy
\begin{equation}\label{02}
a!_vb\leq a\sharp_vb\leq a\nabla_vb
\end{equation}
for any $a,b>0$ and $v\in[0,1]$. These weighted means are all strict provided $v\in(0,1)$.

\subsection{\bf Two weighted means.} Recently, Pal et al. \cite{PSMA} introduced a class of operator monotone functions from which they deduced other weighted means, namely the weighted logarithmic mean defined by
\begin{equation}\label{1}
L_v(a,b)=\dfrac{1}{\log a-\log b}\left(\dfrac{1-v}{v}\big(a-a^{1-v}b^v\big)+\dfrac{v}{1-v}\big(a^{1-v}b^v-b\big)\right)
\end{equation}
and the weighted identric mean given by
\begin{equation}\label{2}
I_v(a,b)=\frac{1}{e}\Big(a\nabla_vb\Big)^{\frac{(1-2v)(a\nabla_vb)}
{v(1-v)(b-a)}}\left(\dfrac{b^{\frac{vb}{1-v}}}{a^{\frac{(1-v)a}{v}}}\right)^{\frac{1}{b-a}}.
\end{equation}
One has $L_0(a,b):=\lim\limits_{v\downarrow0}L_v(a,b)=a$ and $L_1(a,b):=\lim\limits_{v\uparrow1}L_v(a,b)=b$, with similar equalities for $I(a,b)$. One can see that $L_v$ and $I_v$ satisfy the conditions (i),(ii) and (iii). For $v=1/2$, they coincide with $L(a,b)$ and $I(a,b)$, respectively.

It has been shown in \cite[Theorem 2.4, Theorem 3.1]{PSMA} that the inequalities
\begin{equation}\label{3}
a\sharp_vb\leq L_v(a,b)\leq(a\sharp_vb)\nabla(a\nabla_vb)\leq a\nabla_v b
\end{equation}
and
\begin{equation}\label{4}
a\sharp_vb\leq I_v(a,b)\leq a\nabla_v b
\end{equation}
hold for any $a,b>0$ and $v\in[0,1]$.

Otherwise, Furuichi and Minculete \cite{FM2020} gave a systematic study from which they obtained a lot of mean-inequalities involving $L_v(a,b)$ and $I_v(a,b)$. Some of their inequalities are refinements and reverses of \eqref{3} and \eqref{4}.\\

The outline of this paper will be organized as follows: In Section 2 we give simple forms for $L_v(a,b)$ and $I_v(a,b)$ and mean-inequalities are obtained in a fast and nice way. We also deduce two other weighted means from $L_v(a,b)$ and $I_v(a,b)$. Section 3 is devoted to investigate a general approach in service of weighted means. We then obtain more weighted means in another point of view. Section 4 displays the operator version of the previous weighted means as well as their related inequalities. In Section 5 we recall the standard power means known in the literature and we use, in Section 6, our approach for obtaining some new weighted means associated to the previous power means.

\section{\bf Another point of view for defining $L_v(a,b)$ and $I_v(a,b)$}

We preserve the same notations as in the previous section. The expressions \eqref{1} and specially \eqref{2} seem to be hard in computational context. We will see that we can rewrite them in other forms having convex characters.

\subsection{\bf Simple forms of $L_v(a,b)$ and $I_v(a,b)$.} The key idea of this section turns out the following result.

\begin{theorem}\label{thM}
For any $a,b>0$ and $v\in[0,1]$ we have
\begin{equation}\label{9}
L_v(a,b)=(1-v)L\big(a\sharp_vb,a\big)+vL\big(a\sharp_vb,b\big)=:L\big(a\sharp_vb,a\big)\nabla_vL\big(a\sharp_vb,b\big).
\end{equation}
\begin{equation}\label{10}
I_v(a,b)=\left(I\big(a\nabla_vb,a\big)\right)^{1-v}\left(I\big(a\nabla_vb,b\big)\right)^v
=:I\big(a\nabla_vb,a\big)\sharp_vI\big(a\nabla_vb,b\big).
\end{equation}
\end{theorem}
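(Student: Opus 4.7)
The plan is to prove both identities by direct substitution into the definitions of $L$ and $I$ and then matching the resulting expressions to formulas \eqref{1} and \eqref{2}.

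For \eqref{9}, I would first exploit the fact that $\log(a\sharp_v b)=(1-v)\log a+v\log b$, so that the denominators in $L(a\sharp_v b,a)$ and $L(a\sharp_v b,b)$ collapse to
\[
\log a-\log(a\sharp_v b)=v(\log a-\log b),\qquad \log b-\log(a\sharp_v b)=(1-v)(\log b-\log a).
\]
Plugging these into the definition of $L$ gives
\[
L(a\sharp_v b,a)=\frac{a-a\sharp_v b}{v(\log a-\log b)},\qquad L(a\sharp_v b,b)=\frac{a\sharp_v b-b}{(1-v)(\log a-\log b)}.
\]
Taking the $\nabla_v$ combination, the weights $(1-v)$ and $v$ multiply these fractions so that the factors $1/v$ and $1/(1-v)$ appear precisely as in \eqref{1}, and one reads off $L_v(a,b)$ exactly.

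For \eqref{10}, the analogous key observation is that
\[
a-a\nabla_v b=-v(b-a),\qquad b-a\nabla_v b=(1-v)(b-a),
\]
which will control the exponents $1/(y-x)$ that appear in $I(x,y)$. Substituting into the definition of the identric mean, I obtain
\[
I(a\nabla_v b,a)=e^{-1}\bigl((a\nabla_v b)^{a\nabla_v b}/a^{a}\bigr)^{1/(v(b-a))},
\]
\[
I(a\nabla_v b,b)=e^{-1}\bigl(b^{b}/(a\nabla_v b)^{a\nabla_v b}\bigr)^{1/((1-v)(b-a))}.
\]
Raising the first to the power $1-v$ and the second to the power $v$, the prefactor $e^{-1}$ is preserved (since $(1-v)+v=1$), the $a$- and $b$-exponents line up immediately with those in \eqref{2}, and the $(a\nabla_v b)$-exponent becomes
\[
\frac{(1-v)(a\nabla_v b)}{v(b-a)}-\frac{v(a\nabla_v b)}{(1-v)(b-a)}=\frac{(1-2v)(a\nabla_v b)}{v(1-v)(b-a)},
\]
via $\tfrac{1-v}{v}-\tfrac{v}{1-v}=\tfrac{(1-v)^2-v^2}{v(1-v)}$. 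This is precisely the exponent in \eqref{2}, so the identity follows.

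The only genuinely delicate step is the bookkeeping of exponents of $(a\nabla_v b)$ in the second identity; everything else is a routine rearrangement once one notices that $a\sharp_v b$ linearizes $\log$ and $a\nabla_v b$ linearizes the identity. One should also remark that the edge cases $v\in\{0,1\}$ (where the formal expressions are singular) are handled by the limits $L_0(a,b)=a$, $L_1(a,b)=b$, $I_0(a,b)=a$, $I_1(a,b)=b$ mentioned after \eqref{2}, so we may restrict attention to $v\in(0,1)$; by continuity of $L_v$ and $I_v$ in $v$, the identities extend to the endpoints.
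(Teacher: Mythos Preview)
Your proof is correct and follows exactly the route the paper indicates: start from the right-hand sides of \eqref{9} and \eqref{10}, substitute the definitions of $L$, $I$, $a\sharp_v b$, and $a\nabla_v b$, and simplify until \eqref{1} and \eqref{2} appear. The paper merely asserts that these manipulations are straightforward and omits them, so you have supplied precisely the details that were left to the reader.
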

\begin{proof}
Starting from the middle expression of  \eqref{9} and using the definition of $L(a,b)$ and $a\sharp_vb$ we get the desired result after simple algebraic manipulations. By similar way we get \eqref{10}. The details are straightforward and therefore omitted here.
\end{proof}

In what follows we will see that the inequalities \eqref{3} and \eqref{4} can be immediately deduced from \eqref{9} and \eqref{10}, respectively. In fact we will prove more.

\begin{theorem}\label{thM2}
Let $a,b>0$ and $v\in[0,1]$. Then we have
\begin{equation}\label{11}
a\sharp_vb\leq\Big(a\sharp_{\frac{v}{2}}b\Big)\nabla_v\Big(a\sharp_{\frac{1+v}{2}}b\Big)\leq L_v(a,b)\leq(a\sharp_vb)\nabla(a\nabla_vb)\leq a\nabla_v b.
\end{equation}
\begin{equation}\label{12}
a\sharp_vb\leq\big(a\nabla_vb\big)\sharp\big(a\sharp_vb\big)\leq I_v(a,b)\leq\big(a\nabla_{\frac{v}{2}}b\big)\sharp_v\big(a\nabla_{\frac{1+v}{2}}b\big)\leq a\nabla_v b.
\end{equation}
\end{theorem}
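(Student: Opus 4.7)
\medskip
\noindent\textbf{Proof plan.}
The plan is to exploit the reformulation from Theorem \ref{thM} together with the universal chain \eqref{0}, bounding each occurrence of $L$ or $I$ individually and then simplifying the outer weighted mean. The whole argument is essentially a bookkeeping of exponents, so I would split it into four short steps: two for inequality \eqref{11} and two for \eqref{12}, each corresponding to one direction.

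\medskip
\noindent\emph{Step 1 (inner bounds for $L_v$).} Applying the universal inequality $x\sharp y\leq L(x,y)\leq x\nabla y$ from \eqref{0} to the two pieces appearing in the representation $L_v(a,b)=L(a\sharp_vb,a)\nabla_v L(a\sharp_vb,b)$ of Theorem \ref{thM}, I would compute
\[
(a\sharp_vb)\sharp a = a\sharp_{v/2}b,\qquad (a\sharp_vb)\sharp b = a\sharp_{(1+v)/2}b,
\]
using the definition $a\sharp_wb=a^{1-w}b^w$, and
\[
(a\sharp_vb)\nabla a \;\text{and}\;(a\sharp_vb)\nabla b,
\]
whose weighted arithmetic mean with weight $v$ telescopes because $\nabla_v$ is affine:
\[
\bigl((a\sharp_vb)\nabla a\bigr)\nabla_v\bigl((a\sharp_vb)\nabla b\bigr)=(a\sharp_vb)\nabla(a\nabla_vb).
\]
This immediately yields the two middle inequalities of \eqref{11}.

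\medskip
\noindent\emph{Step 2 (outer bounds for $L_v$).} For the leftmost inequality, the weighted AM-GM $(1-v)x+vy\geq x^{1-v}y^v$ applied to $x=a\sharp_{v/2}b$, $y=a\sharp_{(1+v)/2}b$ gives, after collecting exponents,
\[
\bigl(a\sharp_{v/2}b\bigr)\nabla_v\bigl(a\sharp_{(1+v)/2}b\bigr)\geq a^{1-v}b^{v}=a\sharp_vb.
\]
For the rightmost inequality, the bound $a\sharp_vb\leq a\nabla_vb$ from \eqref{02} together with monotonicity of $\nabla$ yields $(a\sharp_vb)\nabla(a\nabla_vb)\leq a\nabla_vb$.

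\medskip
\noindent\emph{Step 3 (inner bounds for $I_v$).} The same scheme applies, this time using $I_v(a,b)=I(a\nabla_vb,a)\sharp_v I(a\nabla_vb,b)$ and the inequalities $x\sharp y\leq I(x,y)\leq x\nabla y$. The arithmetic computations
\[
(a\nabla_vb)\nabla a=a\nabla_{v/2}b,\qquad (a\nabla_vb)\nabla b=a\nabla_{(1+v)/2}b
\]
give the upper inner bound, while
\[
\bigl((a\nabla_vb)\sharp a\bigr)\sharp_v\bigl((a\nabla_vb)\sharp b\bigr)=(a\nabla_vb)^{1/2}(a\sharp_vb)^{1/2}=(a\nabla_vb)\sharp(a\sharp_vb)
\]
(using that $\sharp_v$ is multiplicative in its arguments and the exponent of $a\nabla_vb$ collapses to $1/2$) gives the lower inner bound in \eqref{12}.

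\medskip
\noindent\emph{Step 4 (outer bounds for $I_v$).} The leftmost inequality of \eqref{12} follows from monotonicity of $\sharp$ together with $a\sharp_vb\leq a\nabla_vb$, exactly parallel to Step 2. The rightmost inequality follows from AM-GM,
\[
\bigl(a\nabla_{v/2}b\bigr)\sharp_v\bigl(a\nabla_{(1+v)/2}b\bigr)\leq(1-v)\bigl(a\nabla_{v/2}b\bigr)+v\bigl(a\nabla_{(1+v)/2}b\bigr),
\]
whose right-hand side expands and simplifies to $(1-v)a+vb=a\nabla_vb$.

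\medskip
\noindent\emph{Main difficulty.} There is no real conceptual obstacle, since the hard work has already been absorbed into Theorem \ref{thM} and the classical chain \eqref{0}. The only thing that needs care is the exponent arithmetic in Steps 1 and 3 (checking that $(a\sharp_vb)\sharp a = a\sharp_{v/2}b$ and its analogues), which is where a careless sign or fraction could derail the argument; I would verify these small identities first and then the rest is an assembly of known monotonicities and AM-GM.
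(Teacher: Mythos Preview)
Your proposal is correct and follows essentially the same route as the paper: both use the representation of Theorem~\ref{thM}, bound each inner $L$ (resp.\ $I$) by $\sharp$ and $\nabla$ via \eqref{0}, and then reduce the outer inequalities to \eqref{02}. The only cosmetic differences are that the paper handles the leftmost inequality of \eqref{11} by factoring $(a\sharp_vb)^{1/2}\bigl(\sqrt a\,\nabla_v\sqrt b\bigr)$ rather than computing exponents directly (your AM--GM step is equivalent), and that you spell out the argument for \eqref{12}, which the paper leaves to the reader.
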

\begin{proof}
The two right inequalities of \eqref{11} are those of \eqref{3}. We will prove them again by using \eqref{9}. Indeed, \eqref{9} with the help of \eqref{0} and then \eqref{02} yields
$$L_v(a,b)\leq(1-v)(a\sharp_vb)\nabla a+v(a\sharp_vb)\nabla b=(a\sharp_vb)\nabla(a\nabla_vb)\leq a\nabla_vb.$$
We now prove the two left inequalities of \eqref{11}. Again, \eqref{9} with \eqref{0} and then \eqref{02}, implies that
\begin{multline*}
L_v(a,b)\geq(1-v)(a\sharp_vb)\sharp a+v(a\sharp_vb)\sharp b=(1-v)a\sharp_{\frac{v}{2}}b+v\;a\sharp_{\frac{1+v}{2}}b\\
=(a\sharp_vb)^{1/2}\big(\sqrt{a}\nabla_v\sqrt{b}\big)\geq (a\sharp_vb)^{1/2}\big(\sqrt{a}\sharp_v\sqrt{b}\big)=a\sharp_vb.
\end{multline*}
We left to the reader the routine task for proving \eqref{12} in a similar manner.
\end{proof}

\begin{remark}
(i) From \eqref{9} and \eqref{10} we immediately see that $L_v(a,b)=L_{1-v}(b,a)$ and $I_v(a,b)=I_{1-v}(b,a)$, for any $a,b>0$ and $v\in[0,1]$.\\
(ii) From \eqref{9} and \eqref{10}, it is immediate to see that $L_v(a,b)$ and $I_v(a,b)$ are binary means in the sense that they satisfy the conditions itemized in \cite{PSMA}.\\
(iii) The inequalities \eqref{11} and \eqref{12} give alternative simple proofs for \cite[Corollary 2.2]{FM2020} and \cite[Corollary 2.3]{FM2020}, respectively..
\end{remark}

In order to emphasize even more the importance of \eqref{9} and \eqref{10} we will present below more results. These results investigate some inequalities refining the right inequalities in \eqref{11} and \eqref{12}. We need the following lemma.

\begin{lemma}
Let $a>0$ be fixed. Then the real-functions $x\longmapsto L(a,x)$ and $x\longmapsto I(x,a)$ are (strictly) concave for $x>0$.
\end{lemma}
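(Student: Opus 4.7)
The plan is to prove each concavity statement via a suitable integral representation of the mean.

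For $x\mapsto L(a,x)$: I would start from the representation
\[L(a,x)=\int_0^1 a^{1-s}x^s\,ds,\]
which is verified at once by computing the elementary integral $\int_0^1(x/a)^s\,ds=(x/a-1)/\log(x/a)$ when $x\ne a$ (and directly when $x=a$). For every $s\in(0,1)$ the integrand $x\mapsto a^{1-s}x^s$ satisfies $\partial_x^2\bigl(a^{1-s}x^s\bigr)=a^{1-s}s(s-1)x^{s-2}<0$, so differentiating twice under the integral yields $\partial_x^2 L(a,x)<0$ pointwise, hence strict concavity.

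For $x\mapsto I(x,a)$: The natural integral representation here is for $\log I$. Setting $g(x):=\log I(x,a)$, a direct evaluation of $\int_a^x\log t\,dt$ gives
\[g(x)=\int_0^1\log\bigl((1-s)a+sx\bigr)\,ds.\]
Writing $I(x,a)=e^{g(x)}$, I obtain
\[\frac{d^2}{dx^2}I(x,a)=\bigl(g''(x)+g'(x)^2\bigr)\,I(x,a),\]
so strict concavity reduces to the pointwise inequality $g'(x)^2<-g''(x)$. Differentiating under the integral sign produces
\[g'(x)=\int_0^1\frac{s\,ds}{(1-s)a+sx},\qquad -g''(x)=\int_0^1\frac{s^2\,ds}{\bigl((1-s)a+sx\bigr)^2}.\]
I would close the argument by applying the Cauchy--Schwarz inequality on $L^2(0,1)$ to the pair $\bigl(1,\,s/((1-s)a+sx)\bigr)$: this gives $(g'(x))^2\le -g''(x)$, with equality iff $s/((1-s)a+sx)$ is a.e.\ constant in $s$, which fails since it vanishes at $s=0$ but not at $s=1$.

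The main obstacle is the identric mean: concavity of $\log I$ does \emph{not} by itself imply concavity of $I$, because $\exp$ is convex. The sharp inequality $(g')^2<-g''$ between the two integrals above is exactly what Cauchy--Schwarz supplies in one line, closing the gap. Once this is in hand, the remaining steps in both parts are only routine differentiation under the integral.
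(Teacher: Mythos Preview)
Your argument is correct. The paper itself gives no proof at all beyond the phrase ``It is a simple exercise of Real Analysis,'' so there is no approach to compare in detail; your write-up supplies precisely the missing details.

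A brief comment on method: the most literal ``Real Analysis exercise'' would be to differentiate the closed-form expressions of $L(a,x)$ and $I(x,a)$ twice and check the sign, which is mechanical for $L$ but becomes unpleasant for $I$ because of the exponent $1/(x-a)$. Your choice to pass through the integral representations $L(a,x)=\int_0^1 a^{1-s}x^{s}\,ds$ and $\log I(x,a)=\int_0^1\log\bigl((1-s)a+sx\bigr)\,ds$ is cleaner: the logarithmic case is immediate since each $x\mapsto a^{1-s}x^{s}$ is strictly concave for $s\in(0,1)$, and for the identric mean you correctly identify the real issue---concavity of $\log I$ is \emph{not} enough---and resolve it with the one-line Cauchy--Schwarz bound $(g')^2\le -g''$, strict because $s/((1-s)a+sx)$ is nonconstant in $s$. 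The differentiations under the integral sign are unproblematic since, for $x$ in any compact subset of $(0,\infty)$, the denominators $(1-s)a+sx$ are bounded away from zero.
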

\begin{proof}
It is a simple exercise of Real Analysis.
\end{proof}

\begin{theorem}\label{thM3}
For any $a,b>0$ and $v\in[0,1]$ we have
\begin{equation}\label{13}
L_v(a,b)\leq L\big(a\sharp_vb,a\nabla_vb\big)\leq I\big(a\sharp_vb,a\nabla_vb\big)\leq\big(a\sharp_vb\big)\nabla\big(a\nabla_vb\big).
\end{equation}
\begin{equation}\label{14}
a\sharp_vb\leq\big(a\nabla_v(a\sharp b)\big)\sharp_v\big((a\sharp b)\nabla_vb\big)\leq I_v(a,b).
\end{equation}
\end{theorem}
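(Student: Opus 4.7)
Both \eqref{13} and \eqref{14} rest on the two simple forms established in Theorem~\ref{thM} combined with the concavity properties from the preceding Lemma and the standard chain \eqref{0}. My strategy is to apply Jensen's inequality to a suitable concave function $x\mapsto L(c,x)$ or $x\mapsto I(x,c)$ coming from the Lemma, and then chain the resulting bound with \eqref{0} and the basic AM--GM inequality $x\nabla_v y\geq x\sharp_v y$ already recorded in \eqref{02}.

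For \eqref{13}, I would start from the identity
$$L_v(a,b)=L(a\sharp_v b,a)\,\nabla_v\,L(a\sharp_v b,b)$$
of \eqref{9}. Concavity of $x\mapsto L(a\sharp_v b,x)$ together with Jensen's inequality immediately yields $L(a\sharp_v b,a)\nabla_v L(a\sharp_v b,b)\leq L\bigl(a\sharp_v b,\,a\nabla_v b\bigr)$, which is the leftmost inequality in \eqref{13}. The remaining two inequalities are simply $L\leq I\leq\nabla$ evaluated at the pair $(a\sharp_v b,\,a\nabla_v b)$, which are particular instances of the standard chain \eqref{0}.

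For \eqref{14}, I would start from
$$I_v(a,b)=I(a\nabla_v b,a)\,\sharp_v\,I(a\nabla_v b,b)$$
given by \eqref{10}. For the right inequality, the Lemma furnishes concavity of $x\mapsto I(x,a)$ and (by symmetry of $I$) of $x\mapsto I(x,b)$. Applying Jensen at the convex combination $(1-v)a+vb=a\nabla_v b$ with endpoints $a$ and $b$, and using $I(a,a)=a$, $I(b,b)=b$ together with $I(a,b)\geq a\sharp b$ from \eqref{0}, produces
$$I(a\nabla_v b,a)\geq(1-v)a+v(a\sharp b)=a\nabla_v(a\sharp b),\qquad I(a\nabla_v b,b)\geq(a\sharp b)\nabla_v b.$$
Monotonicity of $(x,y)\mapsto x\sharp_v y$ in each variable then delivers the right half of \eqref{14}. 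For the left half, AM--GM applied to each factor gives
$$\bigl(a\nabla_v(a\sharp b)\bigr)\sharp_v\bigl((a\sharp b)\nabla_v b\bigr)\geq\bigl(a\sharp_v(a\sharp b)\bigr)\sharp_v\bigl((a\sharp b)\sharp_v b\bigr),$$
and substituting $a\sharp b=\sqrt{ab}$ in the right-hand side and collecting exponents collapses it to $a^{1-v}b^v=a\sharp_v b$.

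The main obstacle is essentially bookkeeping rather than any deep idea. For \eqref{14} one must be careful to invoke the Lemma in its first-argument form $x\mapsto I(x,c)$ (rather than the second-argument form used for $L$ in \eqref{13}) and to recognise the resulting convex combinations $(1-v)a+v(a\sharp b)$ and $(1-v)(a\sharp b)+vb$ precisely as $a\nabla_v(a\sharp b)$ and $(a\sharp b)\nabla_v b$. The collapse of the lower bound in \eqref{14} to $a\sharp_v b$ is a clean algebraic identity: the raw exponents $(1-v)^2$ on $a$ and $v^2$ on $b$ are each complemented by a contribution of $v(1-v)$ arising from the factor $(a\sharp b)^{2v(1-v)}=(ab)^{v(1-v)}$, and these sum to $1-v$ and $v$ respectively.
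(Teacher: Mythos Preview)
Your proposal is correct and follows essentially the same route as the paper: for \eqref{13} you apply concavity of $x\mapsto L(a\sharp_v b,x)$ to the representation \eqref{9} and then invoke the chain \eqref{0}; for \eqref{14} you apply concavity of $x\mapsto I(x,c)$ to each factor in \eqref{10}, bound $I(a,b)$ below by $a\sharp b$, and then collapse the AM--GM lower bound to $a\sharp_v b$ exactly as the paper does. The only differences are expository (you name Jensen and monotonicity of $\sharp_v$ explicitly), not substantive.
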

\begin{proof}
We prove the first inequality in \eqref{13}. Since the map $x\longmapsto L(a\sharp_vb,x)$ is concave for $x>0$ then \eqref{9} yields
$$L_v(a,b)\leq L\big(a\sharp_vb,(1-v)a+vb\big)=L\big(a\sharp_vb,a\nabla_vb\big).$$
The second and third inequalities of \eqref{13} follow from \eqref{0}.

To prove the second inequality of \eqref{14}, we write by using the previous lemma
$$I\big(a\nabla_vb,a\big)=I\big((1-v)a+vb,a\big)\geq(1-v)I(a,a)+vI(b,a)=(1-v)a+vI(a,b).$$
This, with the fact that $I(a,b)\geq a\sharp b$, implies that $I\big(a\nabla_vb,a\big)\geq a\nabla_v(a\sharp b)$. Similarly, we show that
$I\big(a\nabla_vb,b\big)\geq(a\sharp b)\nabla_vb$. This, with \eqref{10}, yields the second inequality of \eqref{14}. To prove the first inequality of \eqref{14} we write
$$\big(a\nabla_v(a\sharp b)\big)\sharp_v\big((a\sharp b)\nabla_vb\big)\geq\big(a\sharp_v(a\sharp b)\big)\sharp_v\big((a\sharp b)\sharp_vb\big)=a\sharp_vb,$$
after a simple computation. The proof is finished.
\end{proof}

\subsection{\bf Two other weighted means.} A natural question arises from the previous subsection: do we have a weighted mean $M_v(a,b)$
such that
\begin{equation}\label{FUR}
M_v(a,b) = M(a\sharp_vb,a)!_v M(a\sharp_ vb,b)?
\end{equation}

We can also put the following question: do we have a weighted mean $P_v(a,b)$ such that
\begin{equation}\label{FUR2}
P_v(a,b) = P(a!_vb,a)\sharp_v P(a!_vb,b)?
\end{equation}

In what follows we will answer the two preceding questions. Recall that $L^*$ denotes the dual of the logarithmic mean $L$ and $L_v^*$ is the dual of the weighted logarithmic mean $L_v$, as previously defined. Similar sentence for $I^*$ and $I_v^*$. We will establish the following result.

\begin{theorem}\label{thS}
For any $a,b>0$ and $v\in[0,1]$ we have
\begin{equation}\label{Lvd}
L_v^*(a,b)=L^*\big(a\sharp_vb,a\big)!_vL^*\big(a\sharp_vb,b\big).
\end{equation}
\begin{equation}\label{Ivd}
I_v^*(a,b)=I^*\big(a!_vb,a\big)\sharp_vI^*\big(a!_vb,b\big).
\end{equation}
\end{theorem}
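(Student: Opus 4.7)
The plan is to derive both identities directly from Theorem \ref{thM} by dualizing, using nothing beyond the definition $m^*(a,b)=(m(a^{-1},b^{-1}))^{-1}$ together with the elementary identities $(a\sharp_v b)^{-1}=a^{-1}\sharp_v b^{-1}$, $(a!_v b)^{-1}=a^{-1}\nabla_v b^{-1}$, and the two general rules
\[
\bigl(x^{-1}\nabla_v y^{-1}\bigr)^{-1}=x!_v y,\qquad \bigl(x^{-1}\sharp_v y^{-1}\bigr)^{-1}=x\sharp_v y.
\]
These are just the statements that the geometric mean is self-dual and that the arithmetic and harmonic means are dual to each other.

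For \eqref{Lvd}, I would start from \eqref{9} applied at $(a^{-1},b^{-1})$:
\[
L_v(a^{-1},b^{-1})=L\bigl(a^{-1}\sharp_v b^{-1},a^{-1}\bigr)\nabla_v L\bigl(a^{-1}\sharp_v b^{-1},b^{-1}\bigr).
\]
Substituting $a^{-1}\sharp_v b^{-1}=(a\sharp_v b)^{-1}$ and using $L(x^{-1},y^{-1})=(L^*(x,y))^{-1}$, the right-hand side becomes
\[
\bigl(L^*(a\sharp_v b,a)\bigr)^{-1}\nabla_v \bigl(L^*(a\sharp_v b,b)\bigr)^{-1}.
\]
Taking the reciprocal of both sides and using $(x^{-1}\nabla_v y^{-1})^{-1}=x!_v y$ gives exactly \eqref{Lvd}.

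For \eqref{Ivd}, the argument is completely parallel but with the roles of $\nabla_v$ and $!_v$ swapped. Starting from \eqref{10} evaluated at $(a^{-1},b^{-1})$, I replace $a^{-1}\nabla_v b^{-1}$ by $(a!_v b)^{-1}$ (this is the only non-self-dual swap, which is why the inner mean changes from $\nabla_v$ to $!_v$), then rewrite each $I(\cdot,\cdot)$ factor using $I(x^{-1},y^{-1})=(I^*(x,y))^{-1}$. Inverting the resulting weighted geometric product via $(x^{-1}\sharp_v y^{-1})^{-1}=x\sharp_v y$ yields \eqref{Ivd}.

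There is no real obstacle here beyond keeping track of which inner-mean gets exchanged with its dual. The only subtlety worth highlighting in the write-up is that the outer $\nabla_v$ in \eqref{9} becomes $!_v$ in \eqref{Lvd} because of the final inversion, whereas the outer $\sharp_v$ in \eqref{10} persists as $\sharp_v$ in \eqref{Ivd} by the self-duality of the weighted geometric mean; and dually, the inner $\sharp_v$ in \eqref{9} survives unchanged in \eqref{Lvd}, while the inner $\nabla_v$ in \eqref{10} flips to $!_v$ in \eqref{Ivd}. Once this bookkeeping is stated once, the calculations are routine and can be left to the reader in the same spirit as the proof of Theorem \ref{thM}.
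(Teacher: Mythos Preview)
Your proposal is correct and follows essentially the same route as the paper: both proofs dualize Theorem \ref{thM} by substituting $a^{-1},b^{-1}$ into \eqref{9} and \eqref{10} and then inverting, using the self-duality of $\sharp_v$ and the duality between $\nabla_v$ and $!_v$. The only cosmetic difference is that the paper phrases the computation for \eqref{Lvd} as ``define $M$ by \eqref{Mv} and identify it as $L^*$'' (thereby explicitly answering the question posed in \eqref{FUR}), whereas you go straight to the identity; your version is slightly more streamlined but mathematically identical.
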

\begin{proof}
We can of course assume that $v\in(0,1)$. If in \eqref{9} we replace $a$ and $b$ by $a^{-1}$ and
$b^{-1}$, respectively, then we get
$$L_v(a^{-1},b^{-1})=(1-v)L\big(a^{-1}\sharp_vb^{-1},a^{-1}\big)+vL\big(a^{-1}\sharp_vb^{-1},b^{-1}\big).$$
Taking the inverses side by side and using the definition of the weighted harmonic mean we infer that
\begin{equation}\label{Ld}
\big(L_v(a^{-1},b^{-1})\big)^{-1}=\Big(L\big(a^{-1}\sharp_vb^{-1},a^{-1}\big)\Big)^{-1}!_v\Big(L\big(a^{-1}\sharp_vb^{-1},b^{-1}\big)\Big)^{-1}.
\end{equation}
Now, let us set
\begin{equation}\label{Mv}
M(a\sharp_vb,a):=\Big(L\big(a^{-1}\sharp_vb^{-1},a^{-1}\big)\Big)^{-1}.
\end{equation}
If $v\in(0,1)$ is fixed, for any $a>0$ and $x>0$ it is easy to see that there exists a unique $b>0$ such that $a\sharp_vb=x$. This means that $M$ is well-defined by \eqref{Mv}. Further, if we remark that $a\sharp_vb=x$ implies $x^{-1}=a^{-1}\sharp_vb^{-1}$ then \eqref{Mv} becomes
$$M(x,a)=\big(L(x^{-1},a^{-1})\big)^{-1}:=L^*(x,a).$$
It follows that $M$ is the dual of the logarithmic mean $L$. Following \eqref{Ld} and \eqref{FUR}, the associated weighted mean $M_v$ of $M$ is such that
$$M_v(a,b)=\big(L_v(a^{-1},b^{-1}\big)^{-1}:=L_v^*(a,b),$$
i.e. $M_v(a,b)$ is the dual of the weighted logarithmic mean $L_v$. We left to the reader the task for proving \eqref{Ivd} in a similar manner.
\end{proof}

\begin{remark}
After this, let us observe the following question: is $L_v$ the unique weighted mean satisfying \eqref{9}? In the next section, we will answer this question via a general point of view. Similar question can be put for \eqref{10}, \eqref{Lvd} and \eqref{Ivd}.
\end{remark}

\section{\bf Weighted means in a general point of view}

As already pointed before, we will investigate here a study that shows how to construct some weighted means in a general point of view.

\subsection{\bf Position of the problem.} Let $a,b>0$ and $v\in[0,1]$. Let $p_v$ and $q_v$ be two weighted means. We write $ap_vb:=p_v(a,b)$ and $aq_vb:=q_v(a,b)$ for the sake of simplicity. As previous, $p:=p_{1/2}$ and $q:=q_{1/2}$ and we write $apb:=p(a,b)$ and $aqb=q(a,b)$. To fix the idea and for the first time, we can choose $p_v$ and $q_v$ among the three standard weighted means i.e. $ap_vb,aq_vb\in\big\{a\nabla_vb,a\sharp_vb,a!_vb\big\}$.

Our general problem reads as follows: do we have a weighted mean $M_v(a,b)$ such that
\begin{equation}\label{W1}
M_v(a,b)=M\big(ap_vb,a\big)q_vM\big(ap_vb,b\big)?
\end{equation}
To answer this question, it is in fact enough to justify that there exists one and only one (symmetric) mean $M$ such that
\begin{equation}\label{W2}
M(a,b)=M\big(apb,a\big)qM\big(apb,b\big).
\end{equation}
Indeed, $p_v(a,b)$ and $q_v(a,b)$ are given. Once the (symmetric) mean $M$ is found we obtain $M_v(a,b)$ by substituting $M(a,b)$ in \eqref{W1}.

Note that if $ap_vb,aq_vb\in\big\{a\nabla_vb,a\sharp_vb,a!_vb\big\}$ then we have nine cases. Theorem \ref{thM} answers the previous question for $(p_v,q_v)=(\sharp_v,\nabla_v)$ and $(p_v,q_v)=(\nabla_v,\sharp_v)$ while Theorem \ref{thS} answers the question for $(p_v,q_v)=(\sharp_v,!_v)$ and $(p_v,q_v)=(!_v,\sharp_v)$.

Our aim here is to answer the previous question in its general form. We need to recall some notions and results as background material that we will summarize in the next subsection.

\subsection{\bf Stable and stabilizable means.} We recall here in short the concept of stable and stabilizable means introduced in \cite{RAI0,RAI2,RAI4}. Let $m_1,m_2$ and $m_3$ be three given symmetric means. For all $a,b>0$, the resultant mean-map of $m_1,m_2$ and $m_3$ is defined by, \cite{RAI0}
$${\mathcal
R}(m_1,m_2,m_3)(a,b)=m_1\Big(m_2\big(a,m_3(a,b)\big),m_2\big(m_3(a,b),b\big)\Big).$$
A symmetric mean $m$ is called stable if ${\mathcal R}(m,m,m)=m$ and stabilizable if there exist two nontrivial stable means $m_1$
and $m_2$ such that ${\mathcal R}(m_1,m,m_2)=m$. We
then say that $m$ is $(m_1,m_2)$-stabilizable. If $m$ is stable then so is $m^*$, and
if $m$ is $(m_1,m_2)$-stabilizable then $m^*$ is
$(m_1^*,m_2^*)$-stabilizable. The tensor product of
$m_1$ and $m_2$ is the map, denoted $m_1\otimes m_2$,
defined by
$$\forall a,b,c,d>0\;\;\;\;\;\;\; m_1\otimes
m_2(a,b,c,d)=m_1\Big(m_2(a,b),m_2(c,d)\Big).$$
A symmetric mean $m$ is called cross mean if the map $m^{\otimes
2}:=m\otimes m$ is symmetric in its four variables. Every cross mean is stable, see \cite{RAI0}, and the converse still an open problem.

It is worth mentioning that, the operator version of the previous concepts as well as their related results has been investigated in a detailed manner in \cite{RAI5}. It has been proved there that every cross operator mean is stable but the converse does not in general hold provided that the Hilbert operator-space is of dimension greater than $2$.

The following results will be needed later, see \cite{RAI0,RAI2,RAI4}.

\begin{theorem}\label{thST}
(i) The arithmetic, geometric and harmonic means are cross means and so they are stable.\\
(ii) The logarithmic mean $L$ is $(!,\nabla)$-stabilizable and
$(\nabla,\sharp)$-stabilizable
while the identric mean $I$ is $(\sharp,\nabla)$-stabilizable.\\
(iii) The mean $L^*$ is $(\nabla,!)$-stabilizable and
$(!,\sharp)$-stabilizable while $I^*$ is $(\sharp,!)$-stabilizable.
\end{theorem}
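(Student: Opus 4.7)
For part (i), I would verify each of $\nabla$, $\sharp$, $!$ is a cross mean by direct computation of $m\otimes m$ on four variables. A quick check gives
\[
\nabla\otimes\nabla(a,b,c,d)=\tfrac{a+b+c+d}{4},\quad \sharp\otimes\sharp(a,b,c,d)=(abcd)^{1/4},\quad !\otimes!(a,b,c,d)=\frac{4}{a^{-1}+b^{-1}+c^{-1}+d^{-1}},
\]
each of which is manifestly symmetric in its four arguments. Hence each mean is a cross mean, and by the fact (already noted in the excerpt) that every cross mean is stable, each is stable.

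For part (ii), the plan is to verify the three stabilizability identities by direct algebraic computation, exploiting in each case a telescoping structure. Writing $c=a\nabla b$ and using $\log c-\log a$, $c-a$, etc., the identity ${\mathcal R}(!,L,\nabla)(a,b)=L(a,b)$ reduces to
\[
\tfrac{1}{L(a,c)}+\tfrac{1}{L(c,b)}=\tfrac{\log c-\log a}{c-a}+\tfrac{\log b-\log c}{b-c}=\tfrac{2(\log b-\log a)}{b-a}=\tfrac{2}{L(a,b)},
\]
since $c-a=b-c$; this yields the $(!,\nabla)$-stabilizability of $L$. Similarly, putting $g=a\sharp b$ and observing $\log g-\log a=\log b-\log g$, one obtains $L(a,g)+L(g,b)=\frac{2(b-a)}{\log b-\log a}=2L(a,b)$, which gives the $(\nabla,\sharp)$-stabilizability of $L$. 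For $I$, taking logarithms of $I(x,y)=e^{-1}(y^y/x^x)^{1/(y-x)}$ and again setting $c=a\nabla b$, the numerators $y\log y-x\log x$ telescope across $(a,c)$ and $(c,b)$, giving $\log I(a,c)+\log I(c,b)=2\log I(a,b)$, whence $I(a,b)=\sharp\bigl(I(a,c),I(c,b)\bigr)$, establishing the $(\sharp,\nabla)$-stabilizability of $I$.

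Part (iii) I would not prove by hand at all: it is an immediate consequence of part (ii) together with the duality principle already recorded in Section 3.2, namely that if $m$ is $(m_1,m_2)$-stabilizable then $m^*$ is $(m_1^*,m_2^*)$-stabilizable. Combined with the identities $\nabla^*=!$, $!^*=\nabla$, and $\sharp^*=\sharp$ (recorded in the introduction as $H^*=A$, $G^*=G$), the three dual statements for $L^*$ and $I^*$ drop out automatically from the three statements for $L$ and $I$ proved above.

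The main obstacle is purely computational and concentrated in part (ii): the identric identity looks ugly because the exponents involve $1/(y-x)$, so the natural move is to pass to $\log I$ and exploit the telescoping of $y\log y-x\log x$ around the midpoint $c=a\nabla b$; the logarithmic identities are analogous telescopings (over $\log$ or over linear increments). No structural difficulty arises once the right substitution is made, so the proof is essentially a bookkeeping exercise followed by an appeal to the already-quoted duality machinery.
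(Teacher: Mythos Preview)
Your argument is correct. The paper, however, does not prove Theorem~\ref{thST} at all: it is quoted as background material with a reference to \cite{RAI0,RAI2,RAI4} (``The following results will be needed later, see \cite{RAI0,RAI2,RAI4}''), so there is no in-paper proof to compare against.

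What you supply is a self-contained direct verification, and it is sound. In part~(i) the explicit formulas for $\nabla\otimes\nabla$, $\sharp\otimes\sharp$, $!\otimes!$ are correct and visibly symmetric. In part~(ii) the three telescoping computations all check: with $c=a\nabla b$ one has $c-a=b-c=(b-a)/2$, which collapses both the harmonic sum of $1/L(a,c)$ and $1/L(c,b)$ and the sum $\log I(a,c)+\log I(c,b)$; with $g=a\sharp b$ one has $\log g-\log a=\log b-\log g=(\log b-\log a)/2$, which collapses $L(a,g)+L(g,b)$. Part~(iii) is indeed an immediate consequence of the duality rule recorded in Section~3.2 together with $\nabla^*=!$, $\sharp^*=\sharp$. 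So your proposal goes beyond what the paper itself offers, and nothing in it is wrong.
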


For more examples and properties about stable and stabilizable means we can consult \cite{RAI0,RAI2,RAI4,RAI5}. See also Section 5 below.

\begin{theorem}\label{thCR}
Let $m_1$ and $m_2$ be two symmetric means such that $m_1\leq m_2$ (resp. $m_2\leq m_1$). Assume that $m_1$ is a strict cross mean. Then there exists one and only one $(m_1,m_2)$-stabilizable mean $m$ such that $m_1\leq m\leq m_2$ (resp. $m_2\leq m\leq m_1$).
\end{theorem}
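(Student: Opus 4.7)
I shall focus on the case $m_1\leq m_2$; the reverse case runs in parallel with all inequalities flipped. The plan is to reformulate $(m_1,m_2)$-stabilizability as a fixed-point equation, produce the stabilizable mean by a monotone iteration, and extract uniqueness from the strictness hypothesis.

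Define on the poset of symmetric means (ordered pointwise) the operator
\begin{equation*}
T(\mu)(a,b):=\mathcal{R}(m_1,\mu,m_2)(a,b)=m_1\bigl(\mu(a,m_2(a,b)),\,\mu(m_2(a,b),b)\bigr),
\end{equation*}
so that ``$m$ is $(m_1,m_2)$-stabilizable'' is precisely the equation $T(m)=m$. Since every mean is non-decreasing in each variable, $T$ preserves the pointwise order: $\mu\leq\mu'\Rightarrow T(\mu)\leq T(\mu')$. For existence I would set $\mu_0:=m_1$ and $\mu_{n+1}:=T(\mu_n)$. The stability identity $\mathcal{R}(m_1,m_1,m_1)=m_1$ (automatic from Theorem \ref{thST}-style arguments since $m_1$ is a cross mean), together with $m_1\leq m_2$ and componentwise monotonicity of $m_1$, gives $T(m_1)\geq m_1$, so order-preservation of $T$ makes $(\mu_n)$ pointwise increasing. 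An induction, using the mean sandwich $\min\leq m_1\leq\max$ at each nesting level, is designed to produce the upper bound $\mu_n\leq m_2$ for all $n$; the pointwise limit $m:=\lim_n\mu_n$ is then a symmetric mean in $[m_1,m_2]$, and continuity of $T$ under monotone convergence gives $T(m)=m$.

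For uniqueness, suppose $m$ and $m'$ both lie in $[m_1,m_2]$ and satisfy $T(m)=m$, $T(m')=m'$. The key idea is a contraction argument built on the strictness of $m_1$. One measures discrepancy by a functional such as
\begin{equation*}
\delta := \sup_{a\neq b}\frac{|m(a,b)-m'(a,b)|}{m_2(a,b)-m_1(a,b)},
\end{equation*}
and uses the quantitative form of the \emph{strict} cross property of $m_1$ --- namely that $m_1(x,y)$ lies strictly in the open interval $(\min(x,y),\max(x,y))$ whenever $x\neq y$, with a controllable gap away from the endpoints --- to turn the iterated fixed-point equation into $\delta\leq c\,\delta$ with some $c<1$. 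This forces $\delta=0$, whence $m\equiv m'$.

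The main obstacle is precisely this last step. Without strictness, the trivial means $\min,\max$ would be admissible choices of $m_1$, and every mean in $[m_1,m_2]$ would then satisfy $T(m)=m$, so some quantitative translation of strict cross-ness is unavoidable. A secondary technical point is justifying the inductive upper bound $\mu_n\leq m_2$: because $m_2$ is not assumed stable, one cannot close the induction by an appeal to $\mathcal{R}(m_1,m_2,m_2)=m_2$, and a short case analysis on the position of $m_2(a,b)$ relative to $a$ and $b$ is required at each nesting level.
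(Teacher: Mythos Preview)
The paper does not prove Theorem~\ref{thCR}: it is quoted verbatim as background from \cite{RAI0,RAI2,RAI4} (see the sentence ``The following results will be needed later, see \cite{RAI0,RAI2,RAI4}'' preceding Theorems~\ref{thST} and~\ref{thCR}). So there is no in-paper proof to compare your proposal against; one can only assess whether your sketch stands on its own and matches the spirit of the cited references.

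Your overall architecture --- recasting $(m_1,m_2)$-stabilizability as a fixed point of $T(\mu)=\mathcal{R}(m_1,\mu,m_2)$ and producing $m$ by monotone iteration --- is indeed the route taken in Ra\"{\i}ssouli's papers. However, two of your supporting steps are genuine gaps rather than routine details.

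First, the sentence ``since every mean is non-decreasing in each variable, $T$ preserves the pointwise order'' is false under the definition used here: a mean is only required to satisfy $\min(a,b)\le m(a,b)\le\max(a,b)$, and nothing forces coordinatewise monotonicity. What you actually need is that $m_1$ is monotone in each argument, and this has to be \emph{derived} from the cross-mean hypothesis (in the cited papers this is done via the four-variable symmetry of $m_1\otimes m_1$), not assumed. Without it, neither the monotonicity of $(\mu_n)$ nor the order-preservation of $T$ is available.

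Second, your uniqueness argument does not work as stated. The functional $\delta$ can be infinite or undefined (the denominator $m_2(a,b)-m_1(a,b)$ may vanish at pairs $a\neq b$ where the two means happen to agree), and more seriously, strictness of $m_1$ gives only that $m_1(x,y)$ lies in the \emph{open} interval $(\min(x,y),\max(x,y))$ when $x\neq y$; it does not supply any uniform constant $c<1$. In the cited references, uniqueness is obtained differently: one iterates $T$ from \emph{both} ends of the order interval $[m_1,m_2]$, shows the two monotone sequences have the same limit (this is exactly where the cross/strictness hypothesis on $m_1$ enters, via a Gauss-type common-limit argument), and then any fixed point trapped between them must coincide with that limit. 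Your self-identified difficulty with the upper bound $\mu_n\le m_2$ is resolved by the same two-sided scheme, since the downward sequence started at $m_2$ automatically dominates the upward one.
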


\subsection{\bf The main result.} Now, we are in the position to answer our previous question as recited in the following result.

\begin{theorem}\label{thMS}
Let $a,b>0$ and $v\in[0,1]$. Let $p_v$ and $q_v$ be two weighted means such that $p:=p_{1/2}$ and $q:=q_{1/2}$ are stable. Assume that $q$ is a strict cross mean. Then there exists one and only one weighted mean $M_v(a,b)$ such that \eqref{W1} holds. Further, $M:=M_{1/2}$ is the unique $(q,p)$-stabilizable mean.
\end{theorem}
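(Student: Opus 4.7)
The plan is to reduce Theorem \ref{thMS} to its symmetric version (the case $v=1/2$) and then bootstrap to arbitrary $v$ via the defining formula \eqref{W1}. The key observation is that equation \eqref{W2} can be rewritten as
\[ M(a,b) = q\bigl(M(apb,a),\, M(apb,b)\bigr) = \mathcal{R}(q,M,p)(a,b), \]
which is precisely the statement that $M$ is $(q,p)$-stabilizable. So the symmetric problem is equivalent to producing a unique $(q,p)$-stabilizable symmetric mean.

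For that, I would invoke Theorem \ref{thCR}: since $q$ is a strict cross mean (hence strict stable) and $p$ is stable, as soon as $p$ and $q$ are comparable---which holds in all cases of interest, e.g.\ when $p,q\in\{\nabla,\sharp,!\}$ thanks to the chain $!\leq\sharp\leq\nabla$---there is exactly one $(q,p)$-stabilizable mean $M$, sandwiched between $p$ and $q$. This furnishes the symmetric kernel $M:=M_{1/2}$ of the theorem.

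With $M$ in hand, set
\[ M_v(a,b) := M(ap_vb,a)\, q_v\, M(ap_vb,b), \]
and check the three axioms of a weighted mean. For (i): since $ap_vb$ lies between $a$ and $b$, both $M(ap_vb,a)$ and $M(ap_vb,b)$ stay within $[\min(a,b),\max(a,b)]$, and a $q_v$-mean of them does too, so $M_v$ is a mean. For (ii): plugging in $v=1/2$ recovers \eqref{W2}, hence $M_{1/2}=M$. For (iii): using the weighted mean symmetry $ap_vb=bp_{1-v}a$, the symmetry of $M$, and $xq_vy=yq_{1-v}x$, one rewrites $M_v(a,b)$ as $M_{1-v}(b,a)$ by direct substitution. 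Uniqueness of $M_v$ among weighted means satisfying \eqref{W1} is then automatic: any such $M_v'$ has $M_{1/2}'$ satisfying \eqref{W2}, so $M_{1/2}'=M$ by the uniqueness already established, after which \eqref{W1} pins down $M_v'=M_v$ for every $v$.

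The main obstacle I anticipate is the comparability hypothesis between $p$ and $q$ required by Theorem \ref{thCR}; it is not stated explicitly in Theorem \ref{thMS}, so either it must be read as implicit in the authors' intended setting (the nine choices $p_v,q_v\in\{\nabla_v,\sharp_v,!_v\}$, where comparability is free) or added as a side assumption. Apart from that, the proof is a translation between equation \eqref{W2} and the resultant mean-map $\mathcal{R}$, followed by the three short axiom verifications outlined above.
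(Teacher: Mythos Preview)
Your proposal is correct and follows essentially the same route as the paper: rewrite \eqref{W2} as $M=\mathcal{R}(q,M,p)$, invoke Theorem~\ref{thCR} for the existence and uniqueness of the symmetric $M$, and then recover $M_v$ from \eqref{W1}. Your explicit verification of axioms (i)--(iii) and your remark about the comparability hypothesis $p\leq q$ or $q\leq p$ needed to apply Theorem~\ref{thCR} actually go beyond the paper's own proof, which is terse and invokes Theorem~\ref{thCR} without addressing that hypothesis; the gap you flag is genuine and is present in the paper as well (it is harmless in the intended applications $p_v,q_v\in\{\nabla_v,\sharp_v,!_v\}$, where comparability is automatic).
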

\begin{proof}
As already pointed before, it is enough to consider \eqref{W2}. Following the previous subsection, \eqref{W2} can be written as
$$M={\mathcal R}\big(q,M,p\big).$$
This means that $M$ is $(q,p)$-stabilizable. According to Theorem \ref{thCR}, such $M$ exists and is unique. Since $p_v$ and $q_v$ are given, we then deduce the existence and uniqueness of $M_v$ satisfying \eqref{W1}. The proof is finished.
\end{proof}

Following Theorem \ref{thST}, the symmetric means $a\nabla b, a\sharp b,a!b$ are cross means and so stable. From the preceding theorem we immediately deduce the following corollary.

\begin{corollary}
If $p_v,q_v\in\{\nabla_v,\sharp_v,!_v\}$ then we have the same conclusion as in the previous theorem.
\end{corollary}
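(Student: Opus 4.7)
The plan is to apply Theorem \ref{thMS} directly once the hypotheses are verified for the nine possible pairs $(p_v,q_v)$ with $p_v,q_v\in\{\nabla_v,\sharp_v,!_v\}$. Theorem \ref{thMS} requires that $p:=p_{1/2}$ and $q:=q_{1/2}$ be stable, and that $q$ additionally be a strict cross mean. So the work reduces entirely to checking those three properties for each element of $\{\nabla,\sharp,!\}$.

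First I would observe that taking $v=1/2$ in $\nabla_v,\sharp_v,!_v$ gives exactly the symmetric means $\nabla,\sharp,!$, which are strict by the remark following \eqref{02} (the weighted versions are strict for $v\in(0,1)$, hence in particular at $v=1/2$). Second, I would invoke Theorem \ref{thST}(i), which tells us that $\nabla,\sharp,!$ are cross means, and recall (as stated in Subsection 3.2) that every cross mean is stable. Consequently, for any choice of $q\in\{\nabla,\sharp,!\}$, $q$ is a strict cross mean, and for any choice of $p\in\{\nabla,\sharp,!\}$, $p$ is stable.

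With both hypotheses of Theorem \ref{thMS} thus satisfied, the conclusion follows verbatim: for every choice of $p_v,q_v\in\{\nabla_v,\sharp_v,!_v\}$ there exists one and only one weighted mean $M_v(a,b)$ satisfying \eqref{W1}, and the associated symmetric mean $M:=M_{1/2}$ is the unique $(q,p)$-stabilizable mean.

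There is no real obstacle here; the corollary is essentially a bookkeeping instance of Theorem \ref{thMS}. The only mildly non-trivial point is to make explicit that the three standard symmetric means are \emph{simultaneously} strict and cross (and therefore stable), so that the choice of $q$ among them always meets the stronger hypothesis. Once that is stated, all nine cases—including the four cases $(\sharp_v,\nabla_v)$, $(\nabla_v,\sharp_v)$, $(\sharp_v,!_v)$, $(!_v,\sharp_v)$ already treated by Theorems \ref{thM} and \ref{thS}—are covered uniformly.
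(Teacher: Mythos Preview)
Your proposal is correct and follows essentially the same route as the paper: invoke Theorem~\ref{thST}(i) to see that $\nabla,\sharp,!$ are cross means (hence stable), note they are strict, and then apply Theorem~\ref{thMS}. The paper's argument is in fact a one-line remark to this effect preceding the corollary, so your only difference is that you make the verification of strictness explicit.
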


The condition $p_v,q_v\in\{\nabla_v,\sharp_v,!_v\}$ includes exactly nine cases. The following examples discuss these cases in details.

\begin{example}
(i) Assume that $(p_v,q_v)=(\sharp_v,\nabla_v)$. Theorem \ref{thMS} implies that $M$ is the unique $(\nabla,\sharp)$-stabilizable mean and so Theorem \ref{thST} gives $M=L$. The related weighted mean is the weighted logarithmic mean $L_v$ given by \eqref{9}.\\
(ii) Assume that $(p_v,q_v)=(\nabla_v,\sharp_v)$. By similar way as previous, $M=I$ and $M_v=I_v$ is given by \eqref{10}.\\
(iii) Similarly, if $(p_v,q_v)=(\sharp_v,!_v)$ then $M=L^*$ and $M_v$ is given by \eqref{Lvd}. If $(p_v,q_v)=(!_v,\sharp_v)$ then $M=I^*$ and $M_v$ is given by \eqref{Ivd}.
\end{example}

\begin{example}
For the three cases $p_v=q_v\in\{\nabla_v,\sharp_v,!_v\}$ it is not hard to check that $M_v=p_v=q_v$ and so $M=p=q$. We can show this separately for every case by checking \eqref{W1} or use Theorem \ref{thMS} when combined with Theorem \ref{thST}. The details are immediate and therefore omitted here for the reader.
\end{example}

We have two cases left to see, namely $(p_v,q_v)=(\nabla_v,!_v)$ and $(p_v,q_v)=(!_v,\nabla_v)$, which we will discuss in the two following examples, respectively.

\begin{example}
Assume that $(p_v,q_v)=(\nabla_v,!_v)$. Following Theorem \ref{thMS}, $M$ is the unique $(!,\nabla)$-stabilizable mean and by Theorem \ref{thST} one has $M=L$. The related weighted mean $M_v$ is given by
\begin{equation}\label{aa}
M_v(a,b)=L\big(a\nabla_vb,a\big)!_vL\big(a\nabla_vb,b\big)
\end{equation}
By construction, we have $M=L$ the logarithmic mean. From \eqref{aa} we can check again that $M:=M_{1/2}=L$. In another word, the weighted mean $M_v(a,b)$ defined by \eqref{aa} is a second weighted logarithmic mean which we will denote by ${\mathcal L}_v$. Its explicit form is given by
$${\mathcal L}_v(a,b)=\frac{b-a}{\frac{1-2v}{v(1-v)}\log(a\nabla_vb)+\frac{v}{1-v}\log b-\frac{1-v}{v}\log a},\;\; {\mathcal L}_v(a,a)=a.$$
\end{example}

\begin{example}
Assume that $(p_v,q_v)=(!_v,\nabla_v)$. By similar way as previous, we show that the associated mean $M$ is here given by $M=L^*$ the dual logarithmic mean. The associated weighted mean $M_v$ is defined by
$$M_v(a,b)=L^*\big(a\nabla_vb,a\big)!_vL^*\big(a\nabla_vb,b\big).$$
Also, from this latter relation we can verify that $M:=M_{1/2}=L^*$ and $M_v={\mathcal L}_v^*$, with
$${\mathcal L}_v^*(a,b)=\frac{ab}{b-a}\left\{\frac{1-2v}{v(1-v)}\log(a!_vb)+\frac{v}{1-v}\log b-\frac{1-v}{v}\log a\right\},\;\; {\mathcal L}_v^*(a,a)=a.$$
The details are immediate and therefore omitted here.
\end{example}

The previous examples are summarized in TABLE \ref{table_01}.

\begin{table}[htbp]
\begin{center}
\caption{The weighted means $M_v$ constructed by $p_v$ and $q_v$.}\label{table_01}
\begin{tabular}{|c||c|c|c|}\hline
\backslashbox{$p_v$}{$q_v$} & \qquad $\nabla_v\,\,\,\,\,\,\,\,\,\,\,$ &  \qquad$\sharp_v\,\,\,\,\,\,\,\,\,\,\,$ & \qquad $!_v\,\,\,\,\,\,\,\,\,\,\,$	 \\
 \hline\hline
$\nabla_v$& $\nabla_v$ & $I_v$ & ${\mathcal L}_v$ \\
 \hline
$\sharp_v$  & $L_v$ & $\sharp_v$ & $L_v^*$ \\
  \hline
$!_v$  & ${\mathcal L}^*_v$ & $I^*_v$ & $!_v$ \\
  \hline
\end{tabular}
\end{center}
\end{table}




\section{\bf Operator Version}

The operator version of the previous weighted means as well as their related operator inequalities have been also discussed in \cite{PSMA}. By using their approach for operator monotone functions and referring to the Kubo-Ando theory \cite{KA}, they studied the analogs of $L_v(a,b)$ and $I_v(a,b)$ when the positive real numbers $a$ and $b$ are replaced by positive invertible operators.

Here, and with \eqref{9} and \eqref{10}, we don't need any more tools for giving in an explicit setting the operator versions of $L_v(a,b)$ and $I_v(a,b)$. Before exploring this, let us recall a few basic notions about operator means.

Let $H$ be a complex Hilbert space and let ${\mathcal B}(H)$ be the $\mathbb{C}^*$-algebra of bounded linear operators acting on $H$. The notation ${\mathcal B}^{+*}(H)$ refers to the open cone of all (self-adjoint) positive invertible operators in ${\mathcal B}(H)$. As usual, the notation $A\leq B$ means that $A,B\in{\mathcal B}(H)$ are self-adjoint and $B-A$ is positive semi-definite. A real-valued function $f$ on a nonempty $J$ of ${\mathbb R}$ is said to be operator monotone if and only if $A\leq B$ implies $f(A)\leq f(B)$ for self-adjoint operators $A$ and $B$ whose spectral $\sigma(A),\sigma(B)\subset J$. As usual, $f(A)$ is defined by the techniques of functional calculus. For further details about operator monotone functions we can consult \cite{BEP,IZN,UWYY,UDA} and the related references cited therein. Some examples of operator monotone functions will be considered below.

Following the Kubo-Ando theory \cite{KA}, there exists a unique one-to-one correspondence
between operator means and operator monotone functions. More precisely, an operator mean $m$ in the Kubo-Ando sense is such that
\begin{equation}\label{KA}
AmB=A^{1/2}f_m\big(A^{-1/2}BA^{-1/2}\big)A^{1/2},\; f_m(1)=1
\end{equation}
for some positive monotone increasing function $f_m$ on $(0,\infty)$. The function $f_m$ in \eqref{KA} is called the representing
function of the operator mean $m$. An operator mean in the Kubo-Ando sense is called operator monotone mean.

Let $A,B\in{\mathcal B}^{+*}(H)$ and $v\in[0,1]$. As standard examples of operator monotone means, the following
\begin{multline*}
A\nabla_{v}B=(1-v)A+vB,\;A\sharp_{v}B=A^{1/2}\Big(A^{-1/2}BA^{-1/2}\Big)^{v}A^{1/2},\\
A!_{v}B=\Big((1-v)A^{-1}+vB^{-1}\Big)^{-1}
\end{multline*}
are known  in the literature as the weighted arithmetic mean, the weighted geometric mean and the weighted harmonic mean of $A$ and $B$, respectively. If $v=1/2$ they are simply denoted by $A\nabla B,\; A\sharp B$ and $A!B$, respectively. The previous operator means satisfy the following double inequality
\begin{equation}\label{20}
A!_{v}B\leq A\sharp_{v}B\leq A\nabla_{v}B.
\end{equation}
The weighted logarithmic mean and the weighted identric mean of $A$ and $B$ can be, respectively, defined through:
\begin{equation}\label{22}
L_v(A,B)=A^{1/2}F_{L_v}\big(A^{-1/2}BA^{-1/2}\big)A^{1/2},\;\; F_{L_v}(x)=L_v(1,x),
\end{equation}
\begin{equation}\label{23}
I_v(A,B)=A^{1/2}F_{I_v}\big(A^{-1/2}BA^{-1/2}\big)A^{1/2},\;\; F_{I_v}(x)=I_v(1,x).
\end{equation}
For $v=1/2$, they are simply denoted by $L(A,B)$ and $I(A,B)$, respectively. From \eqref{0}, with the help of \eqref{KA}, we can immediately see that the chain of inequalities
\begin{equation}\label{235}
A!B\leq A\sharp B\leq L(A,B)\leq I(A,B)\leq A\nabla B
\end{equation}
is also valid for any $A,B\in{\mathcal B}^{+*}(H)$ and $v\in[0,1]$.

For the sake of information, the logarithmic mean $L(A,B)$ previously defined can be also alternatively given by one of the following integral forms:
$$L(A,B)=\int_0^1A\sharp_tB\;dt=\Big(\int_0^1\big(A\nabla_tB\big)^{-1}dt\Big)^{-1}.$$

It is worth mentioning that \eqref{22} and \eqref{23} define $L_v(A,B)$ and $I_v(A,B)$ just in the theoretical context. To give the explicit forms of $L_v(A,B)$ and $I_v(A,B)$, analogs to those of \eqref{1} and \eqref{2}, by using \eqref{22} and \eqref{23}, appears to be not obvious and no result reaching from this way. However, according to Theorem \ref{thM} with \eqref{KA} we immediately deduce the following.

\begin{theorem}
For any $A,B\in{\mathcal B}^{+*}(H)$ and $v\in[0,1]$ we have
\begin{equation}\label{25}
L_v(A,B)=L\big(A\sharp_vB,A\big)\nabla_vL\big(A\sharp_vB,B\big).
\end{equation}
\begin{equation}\label{26}
I_v(A,B)=I\big(A\nabla_vB,A\big)\sharp_vI\big(A\nabla_vB,B\big).
\end{equation}
\end{theorem}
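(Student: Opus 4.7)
The plan is to reduce the operator identities to scalar identities via the Kubo--Ando representation \eqref{KA} and Theorem \ref{thM}, then transport the result back using the congruence (transformer) property $C^{*}(X\sigma Y)C=(C^{*}XC)\sigma(C^{*}YC)$, valid for every Kubo--Ando operator mean $\sigma$ and every invertible $C$. First, I would set $T:=A^{-1/2}BA^{-1/2}\in\mathcal B^{+*}(H)$, so that by \eqref{22}--\eqref{23},
\[
L_v(A,B)=A^{1/2}F_{L_v}(T)A^{1/2},\qquad I_v(A,B)=A^{1/2}F_{I_v}(T)A^{1/2}.
\]
Specializing Theorem \ref{thM} to $(a,b)=(1,x)$ gives the explicit scalar formulas
\[
F_{L_v}(x)=(1-v)\,L(x^{v},1)+v\,L(x^{v},x),\quad
F_{I_v}(x)=I((1-v)+vx,1)\sharp_{v}I((1-v)+vx,x).
\]
Since these expressions are built from continuous scalar functions of one variable $x>0$, functional calculus at $T$ commutes with all the operations involved (sums, scalar $\sharp_v$ applied to quantities that are themselves functions of $T$, hence mutually commuting).

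For \eqref{25}, I would take $F_{L_v}(T)=(1-v)L(T^{v},I_H)+vL(T^{v},T)$, conjugate by $A^{1/2}$, and apply the transformer property twice:
\[
A^{1/2}L(T^{v},I_H)A^{1/2}=L\bigl(A^{1/2}T^{v}A^{1/2},A\bigr)=L\bigl(A\sharp_vB,A\bigr),
\]
and similarly $A^{1/2}L(T^{v},T)A^{1/2}=L(A\sharp_vB,A^{1/2}TA^{1/2})=L(A\sharp_vB,B)$, using $A^{1/2}T A^{1/2}=B$ and $A^{1/2}T^{v}A^{1/2}=A\sharp_vB$. Linearity in $\nabla_v$ (because it is an affine combination) then yields \eqref{25}. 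For \eqref{26} the same idea applies, but now the outer combination is the Kubo--Ando $\sharp_v$; here I would first observe that, because $I((1-v)I_H+vT,I_H)$ and $I((1-v)I_H+vT,T)$ are both functions of the single operator $T$, the scalar $\sharp_v$ and the operator $\sharp_v$ agree on this pair, so the scalar formula for $F_{I_v}(T)$ coincides with its operator version. Then I conjugate by $A^{1/2}$ and invoke the transformer property for $\sharp_v$ to rewrite both sides as weighted identric means of $A\nabla_vB$ with $A$ and with $B$ respectively.

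The only delicate point is the identric case: one must be sure that the scalar $\sharp_v$ inside the definition of $F_{I_v}$ correctly lifts through the functional calculus to the Kubo--Ando $\sharp_v$ before the outer conjugation; this is precisely where commutativity of the arguments (both being functions of $T$) is used, together with the fact that for commuting positive operators every Kubo--Ando mean reduces to its representing scalar function applied jointly. Once this observation is in place, the transformer property handles the rest mechanically and both identities drop out with no further computation.
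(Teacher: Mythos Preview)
Your proposal is correct and is precisely the natural unpacking of the paper's one-line justification, which simply says the result follows ``according to Theorem~\ref{thM} with \eqref{KA}.'' You have made explicit exactly what that sentence means: reduce to the scalar identity of Theorem~\ref{thM} for the representing functions, use that all intermediate quantities are functions of the single operator $T=A^{-1/2}BA^{-1/2}$ (so commute), and then transport back with the congruence/transformer property of Kubo--Ando means.
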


Since all the involved operators in \eqref{25} and \eqref{26} are operator means in the sense of \eqref{KA} then by Theorem \ref{thM2} we immediately deduce the following result as well.

\begin{theorem}
Let $A,B\in{\mathcal B}^{+*}(H)$ and $v\in[0,1]$. Then we have
\begin{equation}\label{27}
A\sharp_vB\leq\Big(A\sharp_{\frac{v}{2}}B\Big)\nabla_v\Big(A\sharp_{\frac{1+v}{2}}B\Big)\leq L_v(A,B)\leq(A\sharp_vB)\nabla(A\nabla_vB)\leq A\nabla_v B.
\end{equation}
\begin{equation}\label{28}
A\sharp_vB\leq\big(A\nabla_vB\big)\sharp\big(A\sharp_vB\big)\leq I_v(A,B)\leq\big(A\nabla_{\frac{v}{2}}B\big)
\sharp_v\big(A\nabla_{\frac{1+v}{2}}B\big)\leq A\nabla_v B.
\end{equation}
\end{theorem}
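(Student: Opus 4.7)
The plan is to lift the scalar inequalities of Theorem \ref{thM2} to the operator setting via the standard Kubo--Ando transfer principle, exactly as the preceding paragraph of the paper suggests. I will first identify every term appearing in \eqref{27} and \eqref{28} as a Kubo--Ando operator mean, and then invoke the fact that scalar inequalities between representing functions of Kubo--Ando means lift to operator inequalities.

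Concretely, first I observe that all the elementary building blocks are classical Kubo--Ando operator means with explicit representing functions: $A\sharp_{v/2}B$, $A\sharp_v B$, $A\sharp_{(1+v)/2}B$, $A\sharp B$ correspond to $x^{v/2}$, $x^v$, $x^{(1+v)/2}$, $x^{1/2}$, and similarly $A\nabla_{v/2}B$, $A\nabla_v B$, $A\nabla_{(1+v)/2}B$, $A\nabla B$ correspond to the affine functions $(1-v/2)+(v/2)x$, $(1-v)+vx$, etc. By definitions \eqref{22} and \eqref{23}, $L_v(A,B)$ and $I_v(A,B)$ are themselves Kubo--Ando operator means; alternatively this is manifest from the representations \eqref{25} and \eqref{26}, which exhibit them as compositions of Kubo--Ando means. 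Finally, each compound expression appearing in the two chains, for instance $(A\sharp_{v/2}B)\nabla_v(A\sharp_{(1+v)/2}B)$, $(A\sharp_v B)\nabla(A\nabla_v B)$, $(A\nabla_v B)\sharp(A\sharp_v B)$, and $(A\nabla_{v/2}B)\sharp_v(A\nabla_{(1+v)/2}B)$, is obtained by applying one Kubo--Ando connection to two arguments that are themselves Kubo--Ando connections of $A,B$; it is a routine verification that such a composition inherits joint operator monotonicity, the transformer inequality, upper semicontinuity, and the normalization $I\tau I=I$, hence is again a Kubo--Ando connection.

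Next, I invoke the transfer principle: if $\sigma_1$ and $\sigma_2$ are two Kubo--Ando operator means with representing functions $f_1,f_2$, and if $a\,\sigma_1\,b\le a\,\sigma_2\,b$ for every $a,b>0$, then setting $a=1$ gives $f_1(x)\le f_2(x)$ for every $x>0$. By functional calculus applied to the positive operator $X=A^{-1/2}BA^{-1/2}$ we get $f_1(X)\le f_2(X)$, and then congruence conjugation by $A^{1/2}$ preserves this order relation, yielding $A\,\sigma_1\,B\le A\,\sigma_2\,B$ by \eqref{KA}. Applying this principle to each of the four scalar inequalities in \eqref{11} of Theorem \ref{thM2} produces the corresponding operator inequalities in \eqref{27}; applying it to each of the four inequalities in \eqref{12} produces \eqref{28}.

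The only conceptual point requiring care is the closure of the Kubo--Ando class under composition described in the first paragraph; once that is granted, the entire statement follows mechanically from Theorem \ref{thM2} and does not require any additional analytic work, which is why the inequalities can be deduced immediately from the scalar case.
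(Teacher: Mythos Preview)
Your proposal is correct and follows essentially the same route as the paper: the paper's proof consists of the single remark that all the terms involved are Kubo--Ando operator means in the sense of \eqref{KA}, so the scalar inequalities of Theorem \ref{thM2} transfer immediately to operators. Your write-up simply makes explicit the mechanism (representing functions, functional calculus, congruence by $A^{1/2}$) that the paper leaves implicit.
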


\begin{remark}
The operator inequalities \eqref{27} and \eqref{28} recover Theorem 2.4 and Theorem 3.2 of \cite{PSMA}, respectively.
\end{remark}

By the same arguments as previous, the operator version of Theorem \ref{thM3} is immediately given in the following statement.

\begin{theorem}
For any $A,B\in{\mathcal B}^{+*}(H)$ and $v\in[0,1]$ we have
\begin{equation}\label{29}
L_v(A,B)\leq L\big(A\sharp_vB,A\nabla_vB\big)\leq I\big(A\sharp_vB,A\nabla_vB\big)\leq\big(A\sharp_vB\big)\nabla\big(A\nabla_vB\big).
\end{equation}
\begin{equation}\label{30}
A\sharp_vB\leq\big(A\nabla_v(A\sharp B)\big)\sharp_v\big((A\sharp B)\nabla_vB\big)\leq I_v(A,B).
\end{equation}
\end{theorem}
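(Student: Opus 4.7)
The plan is to transfer Theorem \ref{thM3} from the scalar to the operator setting using the Kubo-Ando representation \eqref{KA}, exactly as was done in the previous theorem to transfer Theorem \ref{thM2}. Every mean appearing in \eqref{29} and \eqref{30}, namely $\nabla_v$, $\sharp_v$, $\nabla$, $\sharp$, $L$ and $I$, is a Kubo-Ando operator mean, so for any $U,V\in{\mathcal B}^{+*}(H)$ with $U$ invertible one has $U\,m\,V=U^{1/2}f_m(U^{-1/2}VU^{-1/2})U^{1/2}$ with a positive monotone representing function $f_m$.

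The first step is to rewrite each compound expression appearing in \eqref{29} and \eqref{30} in the canonical form $A^{1/2}g(X)A^{1/2}$, where $X=A^{-1/2}BA^{-1/2}$ and $g$ is a scalar function depending only on $v$ and on the composition. To do so I would apply the transformer identity $C^{*}(U\,m\,V)C=(C^{*}UC)\,m\,(C^{*}VC)$ with $C=A^{1/2}$: each inner operand of $L(A\sharp_vB,A\nabla_vB)$ and of $I(A\sharp_vB,A\nabla_vB)$ already has the form $A^{1/2}f_i(X)A^{1/2}$, so pulling $A^{1/2}$ outside leaves $A^{1/2}\big(f_i(X)\,m\,f_j(X)\big)A^{1/2}$. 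Since $f_i(X)$ and $f_j(X)$ are both functions of the single operator $X$, they commute, and for commuting positive invertible operators a Kubo-Ando mean reduces to the joint functional calculus of the scalar mean, giving $g(X)$ with $g(x)=m(f_i(x),f_j(x))$. Iterating this device handles the doubly nested expression $\big(A\nabla_v(A\sharp B)\big)\sharp_v\big((A\sharp B)\nabla_vB\big)$ in \eqref{30}.

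Once each side of \eqref{29} and \eqref{30} has been written as $A^{1/2}g_i(X)A^{1/2}$, setting $a=1$, $b=x>0$ in Theorem \ref{thM3} gives the pointwise chain of inequalities between the representing functions $g_i$. These lift to operator inequalities by functional calculus applied to the positive operator $X$, since conjugation by the invertible operator $A^{1/2}$ preserves the operator order, and that is exactly \eqref{29} and \eqref{30}.

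The main obstacle is notational rather than conceptual: one has to execute the canonical reduction uniformly for every expression, verifying at each step that the operands of each Kubo-Ando mean really do commute after $A^{1/2}$ has been factored out, so that the joint functional calculus is legitimate. This commutation is automatic from the very form of the operands, which are all obtained as functions of the single operator $X$ after the reduction. Once this is observed, no fresh inequality needs to be proved; the statement is an immediate transcription of the scalar Theorem \ref{thM3}, and a brief indication to this effect is all the proof requires.
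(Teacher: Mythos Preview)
Your proposal is correct and matches the paper's approach: the paper itself provides no detailed proof, simply stating that ``by the same arguments as previous'' the operator version of Theorem~\ref{thM3} follows, i.e.\ one reduces via the Kubo--Ando representation \eqref{KA} to scalar inequalities between representing functions and invokes Theorem~\ref{thM3}. Your write-up merely makes explicit the reduction to functions of the single operator $X=A^{-1/2}BA^{-1/2}$ (via the transformer identity and commutation), which is precisely the mechanism the paper has in mind.
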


\section{\bf Power symmetric means}

This section deals with some weighted means for power symmetric means in one or two parameters. Let $a,b>0$ and $p,q$ be two real numbers. We recall the following:

$\bullet$ The power binomial mean defined by:
\begin{equation}\label{7}
B_p(a,b)=\Big(\frac{a^p+b^p}{2}\Big)^{1/p}
\end{equation}
This includes the particular cases $B_1(a,b)=a\nabla b,\;
B_0(a,b):=\lim\limits_{p\rightarrow0}B_p(a,b)=a\sharp b$ and $B_{-1}(a,b)=a!b$. Note that $B_{1/2}(a,b)=(a\nabla b)\nabla(a\sharp b)$.

$\bullet$ The power logarithmic mean defined by:
\begin{equation}\label{70}
L_p(a,b):=L_p=\left(\displaystyle\frac{a^{p+1}-b^{p+1}}{(p+1)(a-b)}\right)^{1/p},\; L_p(a,a)=a.
\end{equation}
We have $L_{-2}(a,b)=a\sharp b,\;L_{-1}(a,b)=L(a,b),\; L_0(a,b)=I(a,b),\; L_1(a,b)=a\nabla b$.

$\bullet$ The power difference mean given by:
\begin{equation}\label{71}
D_p(a,b):=D_p=\displaystyle{\frac{p}{p+1}\frac{a^{p+1}-b^{p+1}}{a^p-b^p}},\;
D_p(a,a)=a.
\end{equation}
In particular, $D_{-2}(a,b)=a!b,\; D_{-1}(a,b)=L^*(a,b),\; D_{-1/2}(a,b)=a\sharp b,\; D_0(a,b)=L(a,b)$ and  $D_1(a,b)=a\nabla b$.

$\bullet$ The power exponential mean defined as:
\begin{equation}\label{72}
I_p(a,b):=I_p=\exp\left(\displaystyle{-\frac{1}{p}+\frac{a^p\log\;a-b^p\log\;b}{a^p-b^p}}\right),\; I_p(a,a)=a.
\end{equation}
As special cases, $I_{-1}(a,b)=I^*(a,b),\; I_0(a,b)=a\sharp b$ and $I_1(a,b)=I(a,b)$.

$\bullet$ The second power logarithmic mean defined through:
\begin{equation}\label{73}
{\mathcal L}_p(a,b):={\mathcal L}_p=\left(\displaystyle{\frac{1}{p}\frac{b^p-a^p}{\log\;b-\log\;a}}\right)^{1/p},\;
l_p(a,a)=a.
\end{equation}
In particular, ${\mathcal L}_{-1}(a,b)=L^*(a,b),\; {\mathcal L}_0(a,b)=a\sharp b$ and ${\mathcal L}_1(a,b)=L(a,b)$.

$\bullet$ The previous power means are included in the so-called Stolarsky mean $S_{p,q}$:
\begin{equation}\label{74}
S_{p,q}:=S_{p,q}(a,b)=\displaystyle{\left(\frac{p}{q}\frac{b^q-a^q}{b^p-a^p}\right)^{1/(q-p)}},\;\; S_{p,q}(a,a)=a,
\end{equation}
in the sense that
\begin{equation}\label{75}
B_p=S_{p,2p},\; L_p=S_{1,p+1},\; D_p=S_{p,p+1},\; I_p=S_{p,p}:=\lim_{q\rightarrow p}S_{p,q},\; {\mathcal L}=S_{0,p}.
\end{equation}
All the previous power means are symmetric in $a$ and $b$. Also, remark that $S_{p,q}$ is symmetric in $p$ and $q$. Otherwise, the power binomial mean $B_p$ is stable for any $p\in{\mathbb R}$ and the following result holds, see \cite{RAI3}.

\begin{theorem}\label{thSM}
For any $p,q\in{\mathbb R}$, the Stolarsky mean $S_{p,q}$ is $\big(B_{q-p},B_p\big)$-stabilizable.
\end{theorem}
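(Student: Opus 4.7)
The plan is to verify the stabilizability identity $\mathcal{R}(B_{q-p}, S_{p,q}, B_p)=S_{p,q}$ by a direct algebraic computation, treating first the generic case $p,q,q-p\neq 0$ and then extending to the degenerate cases by continuity in $(p,q)$.

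First I would unfold the definition of the resultant mean-map: writing $c:=B_p(a,b)$ so that $c^p=(a^p+b^p)/2$, we need to show
$$B_{q-p}\Bigl(S_{p,q}(a,c),\, S_{p,q}(c,b)\Bigr)=S_{p,q}(a,b).$$
The central observation driving the whole calculation is that $c$ is the $p$-arithmetic midpoint, which forces $c^p-a^p=(b^p-a^p)/2=b^p-c^p$. This means that after raising $S_{p,q}(a,c)$ and $S_{p,q}(c,b)$ to the power $q-p$ (which exposes the defining quotient $\tfrac{p}{q}\tfrac{\cdot^q-\cdot^q}{\cdot^p-\cdot^p}$), the two resulting fractions share the common denominator $b^p-a^p$ (up to the factor $2$), so the $(q-p)$-power binomial mean averages them with equal weights.

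The crux is then the telescoping identity in the numerator:
$$(c^q-a^q)+(b^q-c^q)=b^q-a^q,$$
so that the averaged quantity becomes exactly $\tfrac{p}{q}\tfrac{b^q-a^q}{b^p-a^p}=S_{p,q}(a,b)^{q-p}$. Taking the $(q-p)$-th root closes the argument. I expect no real obstacle here — the whole verification is driven by the fact that $B_p$ is the mean that linearizes the $p$-th power, so the numerator differences telescope while the denominators become identical.

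The only technical nuisance is the handling of the degenerate values $p=0$, $q=0$, $q-p=0$, and $p=q$, for which either $B_p$, $B_{q-p}$, or $S_{p,q}$ is defined as a limit (geometric mean on the $B$-side, $I_p$ or other exponential-type forms on the $S$-side). I would dispose of these by noting that both sides of the identity are jointly continuous in $(p,q)$ on $\mathbb{R}^2$ for fixed $a,b>0$, so the identity for generic $(p,q)$ extends by continuity. Alternatively, one can redo the computation directly in each degenerate case using the appropriate logarithmic form of $S_{p,q}$; this is routine and need not be written out.
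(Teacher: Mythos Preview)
Your direct verification is correct. In the paper this theorem is not proved but quoted from \cite{RAI3}; your telescoping computation --- exploiting that $c=B_p(a,b)$ forces $c^p-a^p=b^p-c^p=(b^p-a^p)/2$, so that raising $S_{p,q}(a,c)$ and $S_{p,q}(c,b)$ to the power $q-p$ produces a common denominator and a numerator $(c^q-a^q)+(b^q-c^q)=b^q-a^q$ --- is exactly the standard argument underlying that citation. The extension to the degenerate parameter values $p=0$, $q=0$, $p=q$ by joint continuity in $(p,q)$ is legitimate and suffices.
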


The previous theorem when combined with \eqref{75} and a simple argument of continuity immediately implies the following, see also \cite{RAI0}.

\begin{corollary}\label{thCSM}
For all real number $p$, the following assertions hold:\\
(i) The power mean $L_p$ is $(B_p,\nabla)$-stabilizable while $D_p$ is $(\nabla,B_p)$-stabilizable.\\
(ii) The power mean $I_p$ is $(\sharp,B_p)$-stabilizable while ${\mathcal L}_p$ is
$(B_{p},\sharp)$-stabilizable.
\end{corollary}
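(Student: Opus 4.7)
The plan is to deduce this corollary directly from Theorem \ref{thSM} together with the representations \eqref{75}, treating each of the four assertions as a specialization of the Stolarsky parameters $(p,q)$. I will handle three of the four cases by a direct substitution, and reserve the continuity argument for the single case where $S_{p,q}$ is not itself a mean but a limit.

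First I would take $L_p = S_{1,p+1}$ from \eqref{75}. Applying Theorem \ref{thSM} with the parameters $(1, p+1)$ gives that $S_{1,p+1}$ is $(B_{(p+1)-1}, B_1)$-stabilizable, i.e.\ $(B_p, B_1)$-stabilizable. Since $B_1 = \nabla$, this is exactly the claim that $L_p$ is $(B_p, \nabla)$-stabilizable. For $D_p = S_{p,p+1}$, the same theorem with parameters $(p, p+1)$ produces $(B_1, B_p) = (\nabla, B_p)$-stabilizability, which is the second half of (i). For ${\mathcal L}_p = S_{0,p}$, Theorem \ref{thSM} with parameters $(0, p)$ gives $(B_p, B_0) = (B_p, \sharp)$-stabilizability, since $B_0 = \sharp$, and this is the second half of (ii).

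The remaining case, $I_p = \lim_{q\to p} S_{p,q}$, is the one I expect to be the only delicate step. For each $q \neq p$, Theorem \ref{thSM} gives that $S_{p,q}$ is $(B_{q-p}, B_p)$-stabilizable, namely
\[
S_{p,q} \;=\; {\mathcal R}\bigl(B_{q-p},\, S_{p,q},\, B_p\bigr).
\]
I would let $q \to p$: on the one hand $S_{p,q}(a,b) \to I_p(a,b)$ by \eqref{75} and \eqref{72}, and on the other hand $B_{q-p} \to B_0 = \sharp$ pointwise in $(a,b)$, since $B_r$ is continuous in the parameter $r$ at $r=0$ (this is the standard fact $\lim_{r\to 0}((a^r+b^r)/2)^{1/r} = \sqrt{ab}$). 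Combining these with the continuity of ${\mathcal R}(\cdot,\cdot,\cdot)$ in each of its three mean-arguments (which follows from continuity of $B_{q-p}$, $S_{p,q}$, and $B_p$ in $(a,b)$), I pass to the limit on both sides to conclude
\[
I_p \;=\; {\mathcal R}\bigl(\sharp,\, I_p,\, B_p\bigr),
\]
which is precisely $(\sharp, B_p)$-stabilizability of $I_p$, completing (ii).

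The main obstacle, as noted, is this final limit. The subtleties to check are that $B_{q-p}$ indeed tends to $\sharp$ as $q\downarrow p$ (and as $q \uparrow p$), and that the resultant map ${\mathcal R}$ genuinely commutes with the limit, which I would justify by the pointwise continuity of each involved mean in its arguments as well as in the parameter. Everything else is a bookkeeping verification from \eqref{75}, so once this continuity step is in place the corollary follows uniformly for all $p \in \mathbb{R}$.
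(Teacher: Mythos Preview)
Your proposal is correct and matches the paper's own argument essentially verbatim: the paper simply says the corollary follows from Theorem \ref{thSM} combined with the identifications in \eqref{75} and ``a simple argument of continuity,'' which is precisely the plan you describe. One minor remark: since Theorem \ref{thSM} is stated for \emph{all} $p,q\in\mathbb{R}$, the case $I_p=S_{p,p}$ can in fact be read off directly as $(B_{p-p},B_p)=(\sharp,B_p)$-stabilizability without a separate limiting argument, though your continuity justification is of course valid and harmless.
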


Now, let us observe the following remark which is of interest.

\begin{remark}\label{remS}
Since $S_{p,q}=S_{q,p}$ we can also say that $S_{p,q}$ is $\big(B_{p-q},B_q\big)$-stabilizable. This, with \eqref{75}, implies also that, $L_p$ is $(B_{-p},B_{p+1})$-stabilizable, $D_p$ is $(!,B_{p+1})$-stabilizable, ${\mathcal L}_p$ is $(B_{-p},B_p)$-staabilizable and no news for $I_p$. Obviously, (i) and (ii) of Corollary \ref{thCSM} are simpler than these latter statements.
\end{remark}

\section{\bf Some new weighted power means}

In this section we will investigate the weighted means of the previous power means. The weighted power binomial mean can be immediately given by
$$B_{p;v}(a,b)=\Big((1-v)a^p+vb^p\Big)^{1/p},\;\; B_{0,v}(a,b)=a\sharp_vb.$$
This, with the results presented in the preceding section, will allow us to construct some new weighted power means. Recall that, $m_v$ is called weighted mean if it satisfies the conditions: $m_v$ is a mean for any $v\in[0,1]$, $m:=m_{1/2}$ is a symmetric mean and $m_v(a,b)=m_{1-v}(b,a)$ for any $a,b>0$ and $v\in[0,1]$. We then say that $m_v$ is a $m$-weighted mean and $m$ is the symmetric mean of $m_v$. It is obvious that for any weighted mean $m_v$, its associated symmetric mean $m:=m_{1/2}$ is unique. However, for a given symmetric mean $m$ we can have two $m$-weighted means $m_v$ and $l_v$ i.e. $m_{1/2}=l_{1/2}=m$. For more explanation about this latter situation, see the examples below.

In a general context, we have the following result.

\begin{theorem}\label{thWm}
Let $m$ and $l$ be two stable means and let $M$ be $(l,m)$-stabilizable. Let $m_v$ and $l_v$ be the $m$-weighted mean and the $l$-weighted mean, respectively. Then the following
\begin{equation}\label{Wm}
M_v(a,b)={\mathcal R}\big(l_v,M,m_v\big)(a,b)=M\big(am_vb,a\big)l_vM\big(am_vb,b\big)
\end{equation}
is a $M$-weighted mean.
\end{theorem}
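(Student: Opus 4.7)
My plan is to verify directly the three defining properties (i), (ii), (iii) of a weighted mean for $M_v$ as defined by \eqref{Wm}, using the stabilizability hypothesis on $M$ and the fact that $m_v$, $l_v$ are weighted means.

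For property (i), I would argue by nested containment. Fix $v \in [0,1]$ and let $c := am_vb$. Since $m_v$ is a mean, $c$ lies between $\min(a,b)$ and $\max(a,b)$. Since $M$ is a mean, both $M(c,a)$ and $M(c,b)$ lie in the closed interval $[\min(a,b), \max(a,b)]$. Finally, since $l_v$ is a mean, $M_v(a,b) = l_v(M(c,a), M(c,b))$ lies in the same interval, so $M_v$ is a binary mean.

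For property (ii), I would plug $v = 1/2$ into \eqref{Wm}. Because $m_v$ and $l_v$ are weighted means with associated symmetric means $m$ and $l$ respectively, we get $m_{1/2} = m$ and $l_{1/2} = l$, so
\begin{equation*}
M_{1/2}(a,b) = M(amb, a)\, l\, M(amb, b) = \mathcal{R}(l, M, m)(a,b).
\end{equation*}
The hypothesis that $M$ is $(l,m)$-stabilizable means exactly $\mathcal{R}(l, M, m) = M$, hence $M_{1/2} = M$ is the symmetric mean associated to $M_v$.

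For property (iii), the key observation is that any $(l,m)$-stabilizable mean is symmetric (stabilizability is defined only for symmetric means in the framework recalled in Section 3.2). Using symmetry of $M$ together with the weighted-mean identities $m_{1-v}(b,a) = m_v(a,b)$ and $l_{1-v}(x,y) = l_v(y,x)$, I compute
\begin{equation*}
M_{1-v}(b,a) = l_{1-v}\bigl( M(b\, m_{1-v} a, b), M(b\, m_{1-v} a, a) \bigr) = l_{1-v}\bigl( M(am_vb, b), M(am_vb, a) \bigr),
\end{equation*}
and swapping the two arguments of $l_{1-v}$ converts it to $l_v$, yielding $l_v(M(am_vb, a), M(am_vb, b)) = M_v(a,b)$.

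The three verifications are each short; the only subtle point, and what I regard as the main (mild) obstacle, is ensuring the symmetry of $M$ is available so that the argument swap in (iii) really produces $M_v(a,b)$ on the nose. This is guaranteed by the fact that the whole stable/stabilizable framework of \cite{RAI0,RAI2,RAI4} is developed within the class of symmetric means, so $M$ being $(l,m)$-stabilizable forces symmetry of $M$ without further hypothesis.
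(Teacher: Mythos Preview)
Your proposal is correct and follows exactly the direct-verification approach the paper has in mind; the paper itself simply declares the proof ``straightforward'' and omits all details, so you have faithfully supplied what was left to the reader. One minor remark: the symmetry of $M$ is actually needed already in your step (ii), to match $l\bigl(M(amb,a),M(amb,b)\bigr)$ with the resultant map $\mathcal{R}(l,M,m)(a,b)=l\bigl(M(a,amb),M(amb,b)\bigr)$, whereas your computation in (iii) goes through without it once $bm_{1-v}a=am_vb$ is used.
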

\begin{proof}
It is straightforward. The details are simple and therefore omitted here for the reader.
\end{proof}

Applying the previous simple result to the preceding power means, we will immediately obtain their associated weighted power means. We present these in the following examples. We begin by the $S_{p,q}$-weighted mean and we then deduce the other weighted power means as particular cases.

\begin{example}
By Theorem \ref{thSM}, $S_{p,q}$ is $(B_{q-p},B_p)$-stabilizable. By Theorem \ref{thWm}, an $S_{p,q}$-weighted mean is given by
\begin{equation}\label{zz}
S_{p,q;v}(a,b)=B_{q-p;v}\Big(S_{p,q}\big(B_{p;v}(a,b),a\big),S_{p,q}\big(B_{p;v}(a,b),b\big)\Big).
\end{equation}
Utilizing \eqref{70} with \eqref{7}, the explicit form of $S_{p,q;v}(a,b)$ is given by (for $a\neq b$)
$$S_{p,q;v}(a,b)=\left(\frac{p}{q}\frac{1}{b^p-a^p}\Big(\frac{1-v}{v}\big(B_{p;v}^q-a^q\big)
+\frac{v}{1-v}\big(b^q-B_{p;v}^q\big)\Big)\right)^{\frac{1}{q-p}},$$
provided that $p\neq 0,q\neq 0,p\neq q$, where we write $B_{p;v}:=B_{p;v}(a,b)$ for simplifying the writing.
The three cases $p=0,q=0$ and $p=q$ will be presented later.
\end{example}

\begin{example}
Since $S_{p,q}$ is also $(B_{p-q},B_q)$-stabilizable, see Remark \ref{remS}, another $S_{p,q}$-weighted mean is given by
$$S_{p,q;v}(a,b)=B_{p-q;v}\Big(S_{p,q}\big(B_{q;v}(a,b),a\big),S_{p,q}\big(B_{q;v}(a,b),b\big)\Big),$$
or, in explicit form, if $p\neq 0,q\neq 0,p\neq q$ and $a\neq b$,
$$S_{p,q;v}(a,b)=\left(\frac{q}{p}\frac{1}{b^q-a^q}\Big(\frac{1-v}{v}\big(B_{q;v}^p-a^p\big)
+\frac{v}{1-v}\big(b^p-B_{q;v}^p\big)\Big)\right)^{\frac{1}{p-q}}.$$
\end{example}

\begin{example}
(i) By Corollary \ref{thCSM}, $L_p$ is $(B_p,\nabla)$-stabilizable. By Theorem \ref{thWm}, the $L_p$-weighted mean is given by
$$L_{p;v}(a,b)=B_{p;v}\Big(L_p\big(a\nabla_vb,a\big),L_p\big(a\nabla_vb,b\big)\Big).$$
By \eqref{70} and \eqref{7}, or just using the relation $L_p=S_{1,p+1}$ with \eqref{zz}, we obtain the explicit form of $L_{p;v}$:
$$L_{p;v}(a,b)=\left(\frac{1}{(p+1)(b-a)}\Big(\frac{1-v}{v}\big((a\nabla_vb)^{p+1}-a^{p+1}\big)+
\frac{v}{1-v}\big(b^{p+1}-(a\nabla_vb)^{p+1}\big)\Big)\right)^{1/p}.$$
(ii) Similarly, since $D_p$ is $(\nabla,B_p)$-stabilizable, we then deduce that the $D_p$-weighted mean is given by
$$D_{p;v}(a,b)=D_p\big(B_{p;v}(a,b),a\big)\nabla_vD_p\big(B_{p;v}(a,b),b\big),$$
or in explicit form, with $B_{p;v}:=B_{p;v}(a,b)$,
$$D_{p;v}(a,b)=\frac{p}{p+1}\frac{1}{b^p-a^p}\left(\frac{1-v}{v}\Big(B_{p;v}^{p+1}-a^{p+1}\Big)+\frac{v}{1-v}\Big(b^{p+1}-B_{p;v}^{p+1}\Big)\right).$$
\end{example}

\begin{example}
By similar arguments as in the previous examples, we obtain:\\
(i) The ${\mathcal L}_p$-weighted mean is defined by
$${\mathcal L}_{p;v}(a,b)=B_{p;v}\Big({\mathcal L}_p\big(a\nabla_vb,a\big),{\mathcal L}_p\big(a\nabla_vb,b\big)\Big),$$
or in explicit form
$${\mathcal L}_{p;v}(a,b)=\left(\frac{p}{b^p-a^p}\Big(\frac{1-v}{v}\big(\log B_{p;v}-\log a\big)+
\frac{v}{1-v}\big(\log b-\log B_{p;v}\big)\Big)\right)^{-\frac{1}{p}}.$$
(ii) The $I_p$-weighted mean is given by
$$I_{p;v}(a,b)=I_p\Big(B_{p;v}(a,b),a\Big)\sharp_vI_p\Big(B_{p;v}(a,b),b\Big).$$
We left to the reader the task for giving the explicit form of this latter weighted power mean.
\end{example}

\section*{Acknowledgement}
The author (S.F.) was partially supported by JSPS KAKENHI Grant Number 16K05257.

\end{document}